\documentclass[10pt]{article}

\usepackage[top=1in, bottom=1in, left=1in, right=1in]{geometry}
\setlength{\parindent}{0in}
\setlength{\parskip}{0.1in}

\usepackage[font=footnotesize,labelfont=bf]{caption}
\usepackage{graphicx}
\usepackage{epstopdf}
\usepackage{calc}
\usepackage{amsbsy,amsfonts,amsmath,amssymb,enumerate,epsfig,rotating,subfigure}
\usepackage[numbers,sort&compress]{natbib}
\setlength{\bibsep}{0.0pt}
\usepackage[flushleft]{threeparttable}
\usepackage{amsthm}
\usepackage[pdfborder={0 0 0},colorlinks=true,allcolors=blue]{hyperref}
\usepackage{url}

\usepackage{multirow}
\usepackage[nofillcomment, vlined, ruled, algosection, algo2e]{algorithm2e}
\usepackage{color}
\usepackage{mathtools}

\newtheorem{theorem}{Theorem}[section]
\newtheorem{lemma}[theorem]{Lemma}
\newtheorem{corollary}[theorem]{Corollary}

\DeclarePairedDelimiter\step{\langle}{\rangle}

\graphicspath{{figures/}}

\begin{document}

\title{Developing a Mathematical Model for Bobbin Lace}

\author{
Veronika Irvine\thanks{Corresponding author. Email: vmi@uvic.ca} and Frank Ruskey\thanks{Research supported in part by an NSERC Discovery Grant}\\
\vspace{6pt}
\em{Department of Computer Science}\\
\em{University of Victoria, British Columbia, Canada}
}

\maketitle

\begin{abstract}
\noindent Bobbin lace is a fibre art form in which intricate and delicate patterns are created by braiding together many threads.  An overview of how bobbin lace is made is presented and illustrated with a simple, traditional bookmark design. Research on the topology of textiles and braid theory form a base for the current work and is briefly summarized. We define a new mathematical model that supports the enumeration and generation of bobbin lace patterns using an intelligent combinatorial search.  Results of this new approach are presented and, by comparison to existing bobbin lace patterns, it is demonstrated that this  model reveals new patterns that have never been seen before.  Finally, we apply our new patterns to an original bookmark design and propose future areas for exploration. \bigskip

\noindent\textbf{Keywords: }Fibre Art; Bobbin Lace; 2-in 2-out Directed Graphs; Toroidal Graphs; Braid Theory; Exhaustive Enumeration \\
00A06; 00A66; 05B45; 05C63; 05C20
\bigskip
\end{abstract}

Bobbin lace is a fibre art form constructed by braiding together many threads.  A very small set of actions is used in its production but the plethora of ways in which these actions can be combined results in the complex organization of threads into lace.  Over the past 500 years,  lacemakers have explored this rich domain relying primarily on trial and error.  The goal of our research is to develop a mathematical model as a systematic way of examining the myriad of possibilities.

In this paper we start with an overview of bobbin lace: what it is and how it is made.  A simple, traditional pattern is provided for the reader who wishes to experiment with this art form.  We then look at related work on the topology of textiles, its application to lace and key ideas that form a basis for our model. The main contribution of this paper is the definition of a model that uses braid theory and graph theory to describe workable patterns.  An intelligent combinatorial search algorithm is used to enumerate and exhaustively generate patterns consistent with this model, and the results are discussed.  Finally, we provide a second pattern for the reader to try.  This original design makes use of new, algorithmically generated patterns which we believe have not been previously discovered.

\section{Introduction}
\begin{figure*}
\begin{minipage}{\textwidth}
\begin{center}
\subfigure[]{
\resizebox*{!}{3.5cm}{\includegraphics{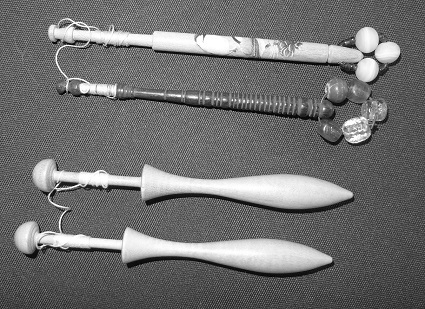}}}%
\subfigure[]{
\resizebox*{!}{3.5cm}{\includegraphics{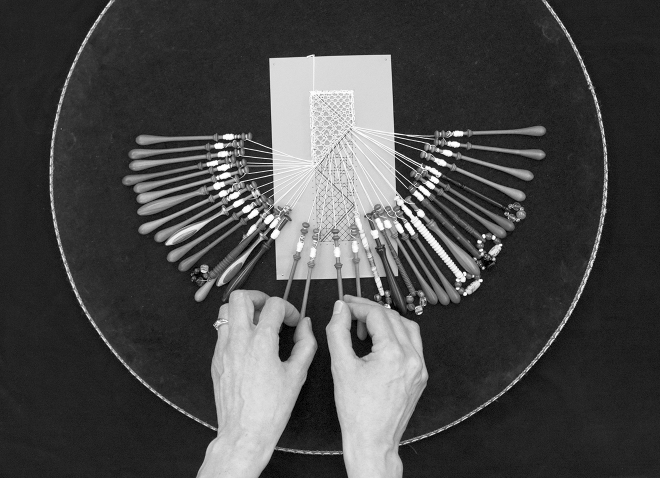}}}%
\subfigure[]{
\resizebox*{!}{3.5cm}{\includegraphics{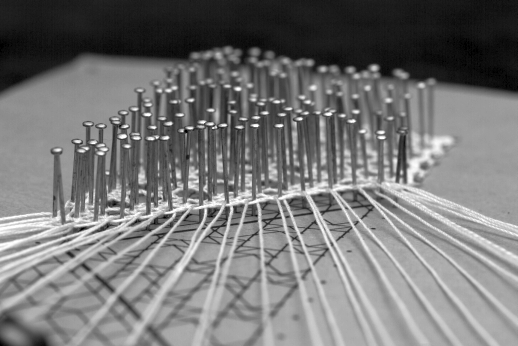}}}%
\caption{a) Two different bobbin styles.  b) Lace bookmark in progress on pillow. c) Closeup of pins.}
\label{fig:materials}
\end{center}
\end{minipage}
\end{figure*}

\label{sec:lacemaking}
There are many ways to create lace by hand.  Some are still commonly practiced in homes today, such as crocheting, knitting and tatting, but there are many rarer forms such as the drawn and cut thread style of Hardanger \cite{hardanger}, the layered embroidery approach of Carrickmacross \cite{carrickmacross} and the subject of this paper, bobbin lace.  Bobbin lace is made by braiding threads together; typically 50 to 100 threads are used but sometimes as many as several hundred.
It has a long history in Europe including the United Kingdom \cite{levey}.  Believed to have developed from passementerie, the art of ornamental braiding, it evolved into its current form during the last half of the 15th and first half of the 16th centuries.  Throughout its history, bobbin lace has been used to create delicate and complex designs with applications including furniture trimmings and coverings (table cloths, bed spreads, skirts, doilies), religious ornaments (altar cloths, robe overlays) and many articles of fashion (shawls, collars, inserts and edgings).  The fortunes of hand-made lace have been subject to the dictates of fashion, war and technology.  In the 18th century it was as valuable as gold and supported a thriving industry employing primarily impoverished women and girls. In the 19th century, stiff competition from machine-made copies drove down the price and led to a simplification of designs.  After World War I, fashions and the expectations of women in the work force changed.  Hand-made bobbin lace production as an industry ceased completely and was relegated to the status of hobby craft while machine-made lace was primarily used for curtains.  Interest in reviving the craft started in the 1970's and initially focused on the simple designs produced in the 19th century \cite{halley, laceguildhistory}.  Over the past 20 years, interest has started to turn toward the more advanced techniques employed in early laces as well as the invention of new techniques.  Active discussion and development continues at the many lace guilds \cite{ioli, oidfa, ukguild} and online discussion groups \cite{ning, snl}.

Perhaps the easiest way to understand bobbin lace is to look at the six step process by which it is made.

\textbf{Step 1) Prepare the threads.}  To manage many long threads without creating a tangled mess, each end of a thread is wound evenly onto one of a pair of bobbins (see Figure \ref{fig:materials}a).  A bobbin, commonly made from wood, is about 10cm in length.  One end is flanged to hold a length of thread and the other end, usually thicker and sometimes weighted with beads, is the handle.

\textbf{Step 2) Prepare the pattern.}  The lace is worked on top of a firm pillow stuffed with material such as straw, sawdust, wool, or, in some modern pillows, ethafoam or polystyrene. The pillow can be disk shaped (cookie pillow, see Figure \ref{fig:materials}b) or sausage shaped (bolster pillow).   As threads are braided together, they are held in place by brass pins pushed into the pillow (see Figure \ref{fig:materials}c).  To start a piece of lace, a pattern, such as the one shown in Figure \ref{fig:oldpattern}a, is copied onto stiff material - originally this may have been vellum or birch bark but modern lace makers use coloured card stock or printing paper overlaid with blue contact paper.  The black dots in the pattern represent the position of pins.  Before starting to make the lace, all of these dots are pricked through to make small holes.  The pattern is then pinned to the pillow.

\textbf{Step 3) Hang the bobbins.} The middle of each thread is draped around an anchoring pin at the top of the pattern with the pair of bobbins hanging down on either side. Often the first row of the pattern is a simple weave to anchor the threads.

\begin{figure*}
\begin{center}
%\resizebox*{!}{2.5cm}{\includegraphics{figures/Figure2.pdf}}
\def\svgscale{0.3}
    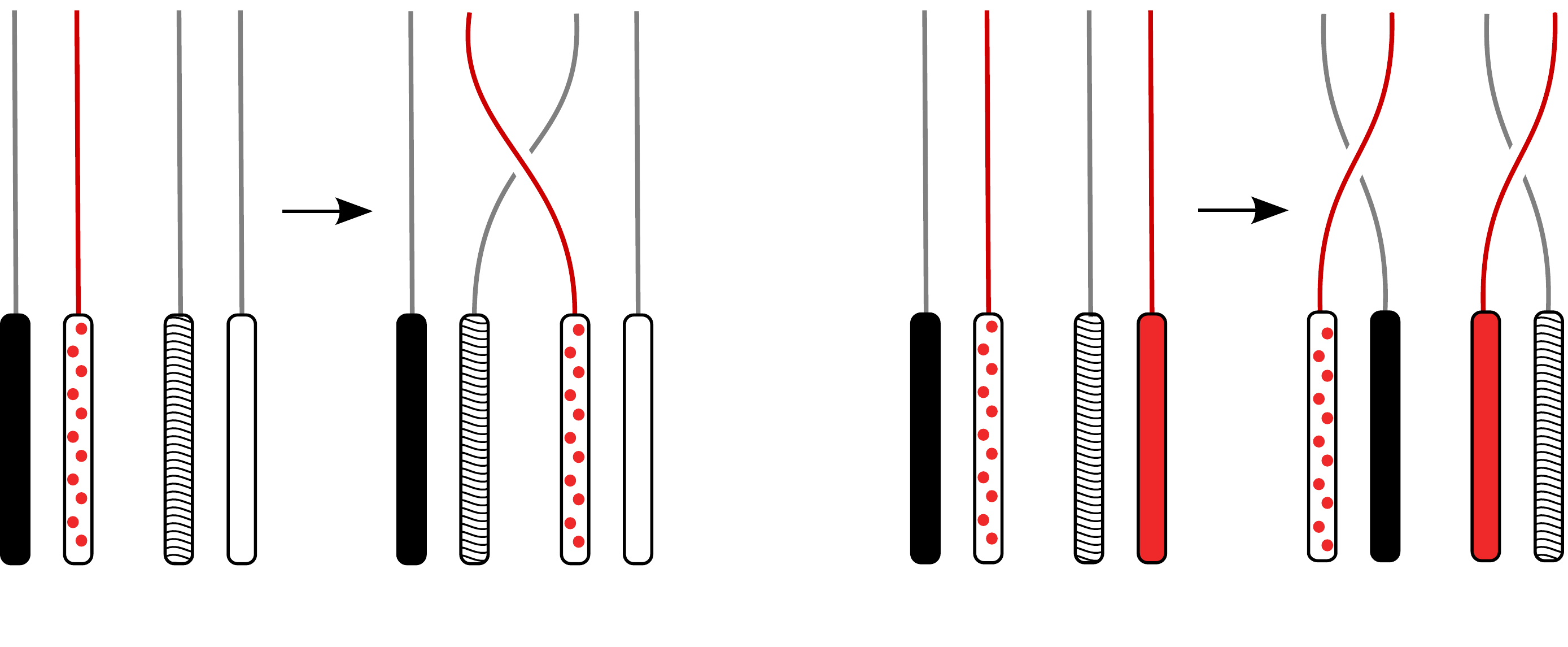 %a
\caption{Cross and twist: the two base actions used in bobbin lace.}
\label{fig:crosstwist}
\end{center}
\end{figure*}

\begin{figure*}
\begin{center}
\resizebox*{!}{5cm}{\includegraphics{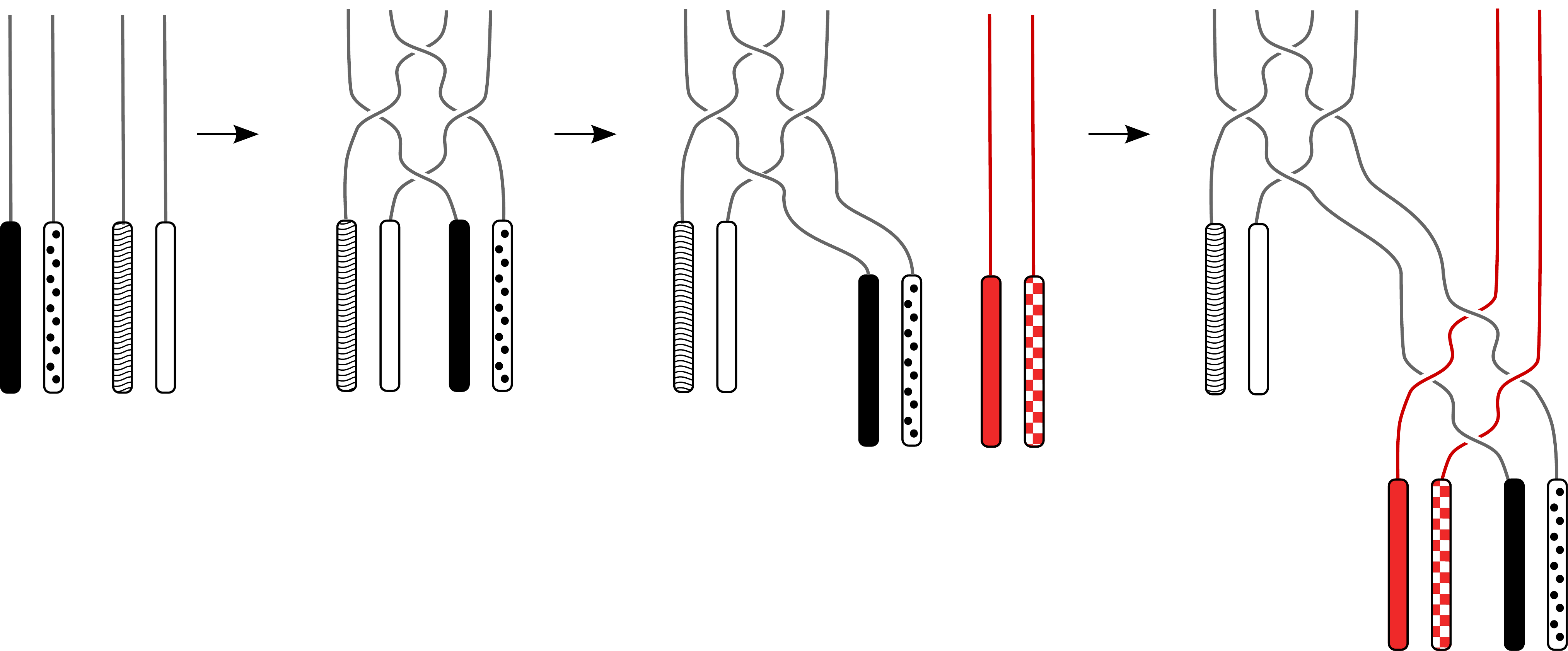}}
\caption{Progression of pairs of threads from one set of actions to another.}
\label{fig:traversal}
\end{center}
\end{figure*}

\textbf{Step 4) Braid and pin.} The lacemaker braids the threads working with four consecutive threads at a time. It is important to note that the four threads are treated as two pairs of threads: a left and a right pair.  Braids are made using two very simple actions.  The first action, known as a `cross' (which we will represent as $C$ where brevity is required), is performed by taking the rightmost thread from the left pair and crossing it over the leftmost thread of the right pair (see Figure \ref{fig:crosstwist}). The second action, known as a `twist' (denoted $T$), is performed by crossing the rightmost thread of the left pair over the leftmost thread of the left pair and similarly crossing the rightmost thread of the right pair over the leftmost thread of the right pair.  Occasionally, a variation of the twist is used: a `left-twist' (in which only the left pair is twisted) or a `right-twist' (in which only the right pair is twisted).
During a sequence of braiding actions, the lacemaker may insert a pin to hold the braid in place (see Figure \ref{fig:materials}c).  The pin provides resistance so that the lacemaker can apply tension to an individual thread without distorting its neighbours. The pinning action (denoted $p$) is performed by placing a pin between the left and right pairs of threads and into one of the prepared holes. Pinning may take place either in the middle of a braid sequence (after which the pin is `closed' because it is enclosed by threads) or after the braid (an `open' pin).
A lace braid can be made from any combination of these actions.  For example, $CTCTCT$ produces a four stranded plait similar to the three stranded plait commonly used to braid hair.  Similarly, $CTpCT$, $CpCT$, $TTT$, $TCp$, $C$ are all valid braids.  The exact sequence of actions used by the lacemaker depends on the pattern.

\textbf{Step 5) Advance to next set.} Once four threads have been braided and pinned, the lace maker moves on to braid another set of four consecutive threads.  This new set of threads may include two threads from the previous set, as shown in Figure \ref{fig:traversal}, but it may also be formed from four completely new threads.  Which four threads comprise the next set depends on the pattern.

Steps 4 and 5 are repeated until the lace pattern is completed.

\textbf{Step 6) Finish.} The threads are secured (sometimes with a knot, sometimes by weaving them back into the lace) and trimmed off.  The pins are removed and the lace may be lifted off the pillow.  Once the pins are removed, the lace is held together by the over and under crossings of the threads and friction.  Like knitted or woven material, if an individual thread is snagged and pulled away from the rest of the piece, the lace will distort.  However, as long as force is not too great and is applied over a number of threads, well made lace will maintain its shape and arrangement.

\begin{figure*}
\begin{center}
\begin{minipage}{\textwidth}
\begin{center}
\subfigure[]{
 \def\svgscale{0.4}
    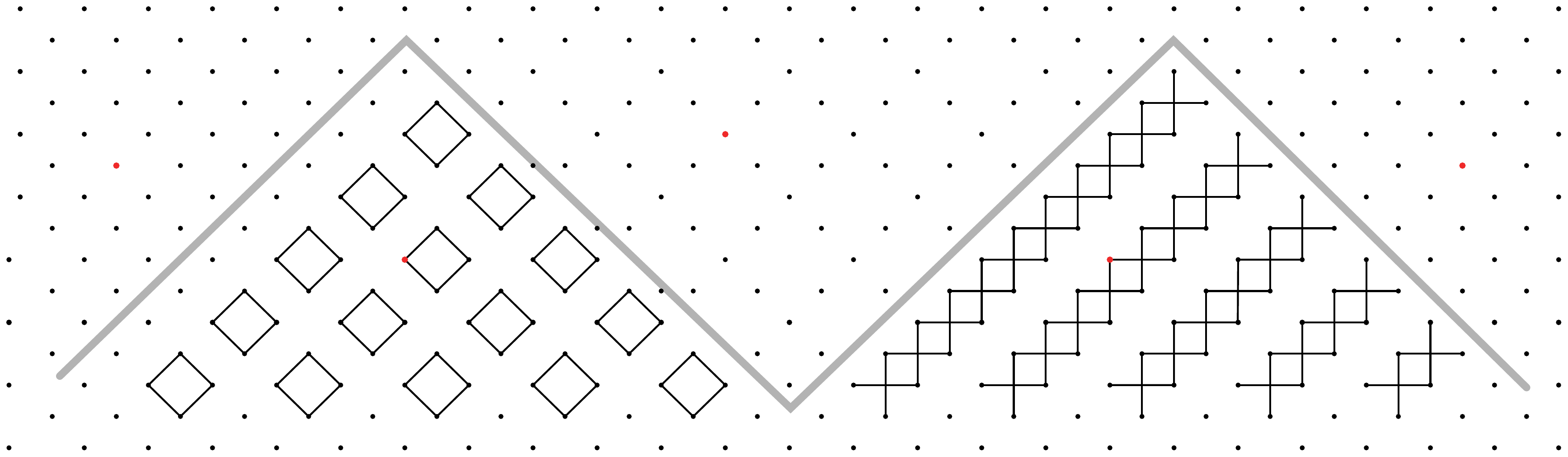 %a
}
\vspace{1cm}
\subfigure[]{
 \def\svgscale{0.4}
    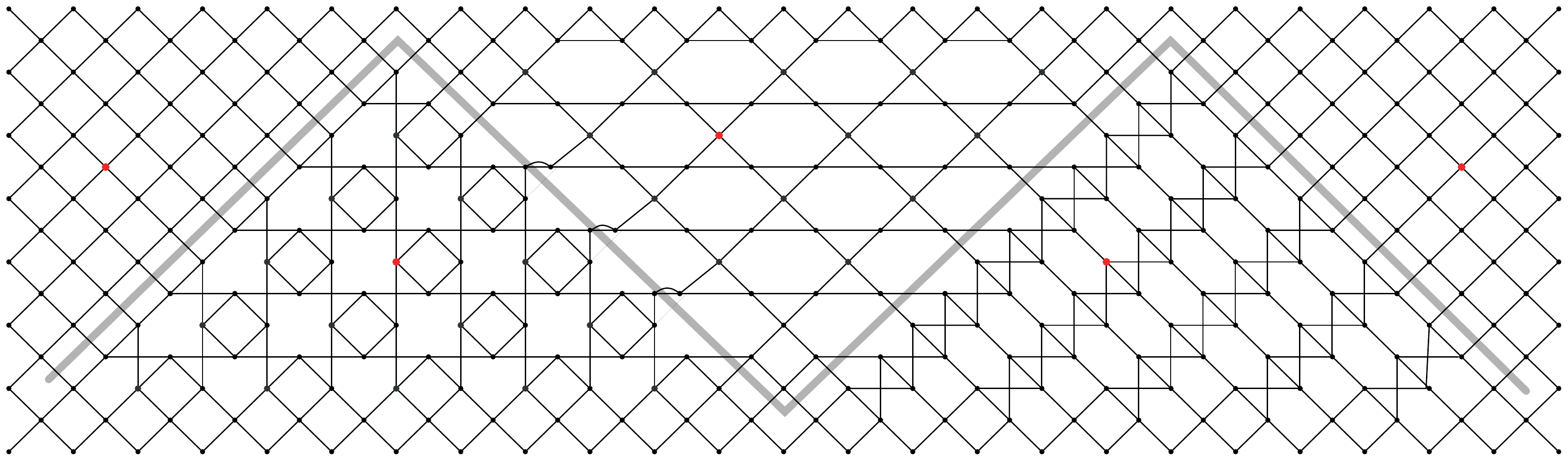 %b
}
\vspace{1cm}
\subfigure[]{
 \def\svgscale{0.7}
    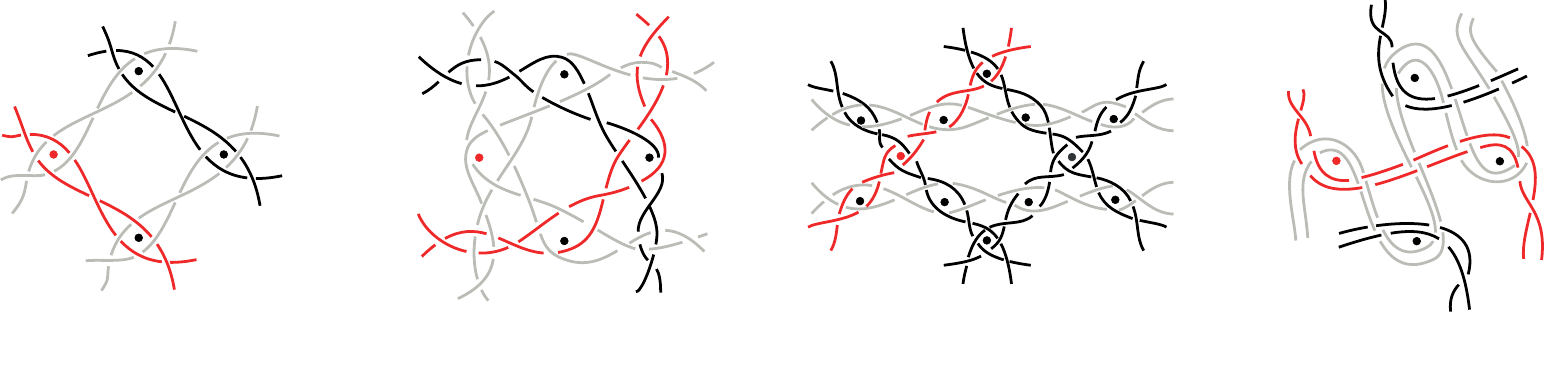 %c
}
\vspace{1cm}
\subfigure[]{
\resizebox*{\textwidth}{!}{\includegraphics{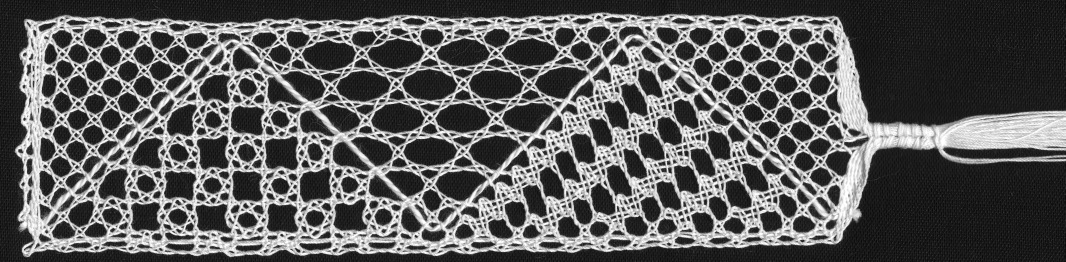}}} % d
\end{center}
\caption{Note: Diagrams have been rotated $90^{\circ}$ counter clockwise. a) A bookmark pattern using traditional grounds.
b) Pair working diagram with regions labelled A, B, C, D and A. Grounds used in the corresponding regions are A) Torchon ground B) Rose ground C) Paris ground D) Bias ground.
c) Thread working diagrams.
d) Completed bookmark.  Actual size: 4 cm by 15 cm.  Made with 19 pairs of \emph{DMC Cordonnet Special No. 40.} primary thread and \emph{DMC Coton Perl\'{e} No 5.} gimp thread.}
\label{fig:oldpattern}
\end{minipage}
\end{center}
\end{figure*}

Many aspects of the process depend on the `pattern', so we shall take a closer look and discuss how it is interpreted by the lacemaker.  The pattern (e.g., see Figure \ref{fig:oldpattern}a) is always in the form of a diagram but the information it contains can vary quite a bit.  The position of a pin is indicated by a dot.  Sometimes a decorative thread of a contrasting colour or thickness, known as a \emph{gimp}, is used to outline a region.  The path of the gimp is marked with a bold line (e.g., Figure \ref{fig:oldpattern}a contains one gimp drawn as a thicker grey line).  In some patterns, lines are drawn in areas where the thread paths are complicated or ambiguous.  In these cases, a line segment represents two threads. Most patterns are accompanied by additional instructions in the form of text or a working diagram. The text often refers to a known `ground' pattern.

A \emph{ground} is a small pattern that can be repeated by periodic tiling to fill a closed region. Lace from a specific geographical area is often characterized by the use of a particular set of grounds well known to the lacemakers of that region.   A ground has specific instructions for the actions performed at each pair crossing, the placement of pins and the order in which pairs are combined.  Many grounds have been catalogued \cite{cook, viele, system} and commonly used grounds are described in most lace reference books.

A \emph{working diagram} is a line drawing that illustrates  how a sub-section of the lace is worked.  Working diagrams may depict individual threads (Figure \ref{fig:oldpattern}c) or pairs of threads (Figure \ref{fig:oldpattern}b). In working diagrams, cross and twist action combinations are often indicated by colouring the lines (using the International Colour Coding System \cite{colourcode}) or decorating the lines with hatch marks.

This has been a brief overview.  For more detailed instructions on equipment and technique, the reader is referred to websites such as \cite{edkins, halleylearn} or books such as \cite{nottingham, dye}.

\section{Related Work}
\label{sec:background}

The application of mathematical modeling to fibre arts is a fairly new area of research, with most of the focus on weaving and knitting. In the introduction to \cite{belcastroyackel}, Belcastro and Yackel give a nice overview of its history.  As far as we are aware, there have not been any publications specific to the mathematical exploration of bobbin lace.  However, more general work on the topology of threads in textiles \cite{grishanovPart1} is directly applicable.  Similarly, Artin's theory of braids \cite{artin}, while only remotely inspired by textiles and lace, gives a precise way of describing an alternating braid which is key to the structure of bobbin lace.  We must also acknowledge the work of modern lacemakers who have systematically explored ways to create new lace grounds.

\subsection{Topology of Textiles}
\label{sec:topology}

\begin{figure*}
\centering
\def\svgscale{0.5}
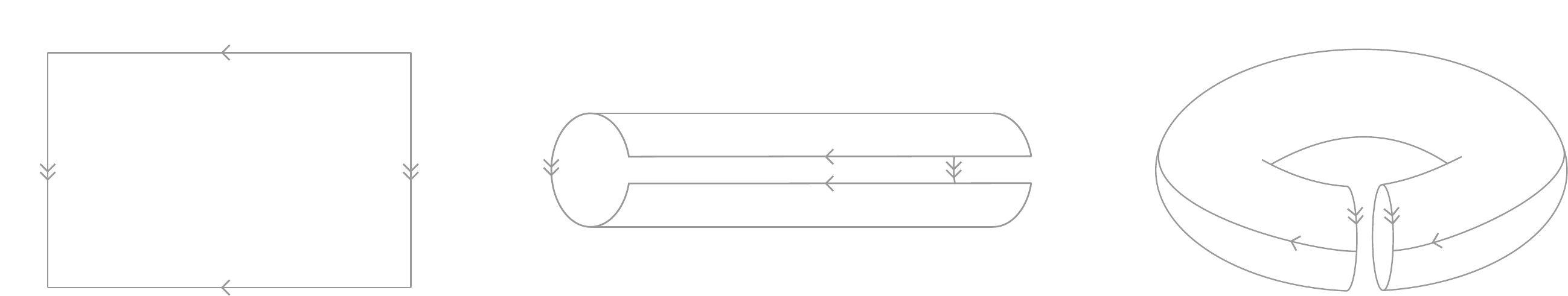
\caption{Left: Parallelogram with edge markings.  Middle and Right: Bending of parallelogram to form a torus.}
\label{fig:torus}
\end{figure*}

\begin{figure*}
\begin{center}
\begin{minipage}{\textwidth}
\begin{center}
\subfigure[]{
\resizebox*{!}{3cm}{\includegraphics{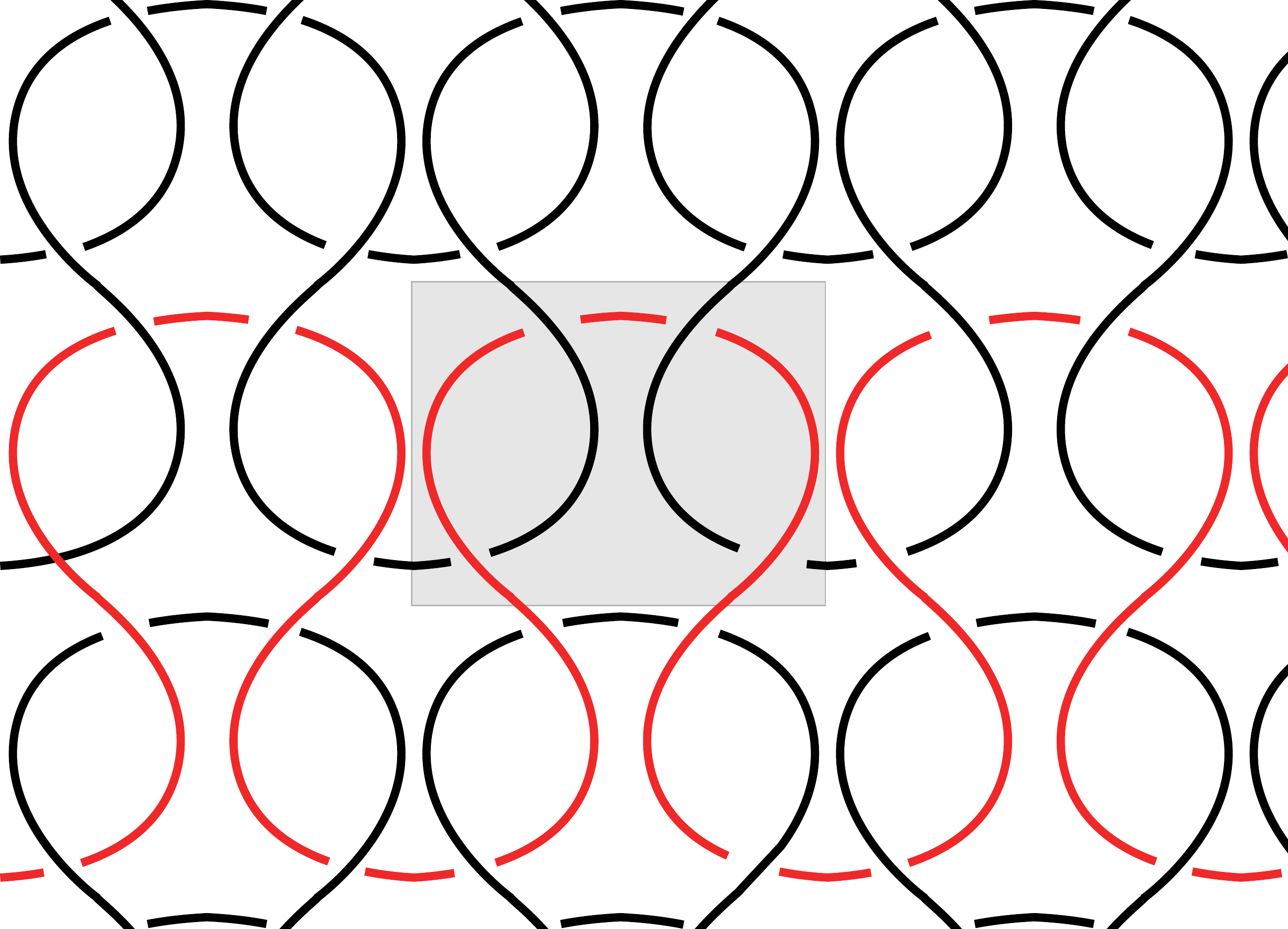}}} % a
\hspace{0.5cm}
\subfigure[]{
\resizebox*{!}{3cm}{\includegraphics{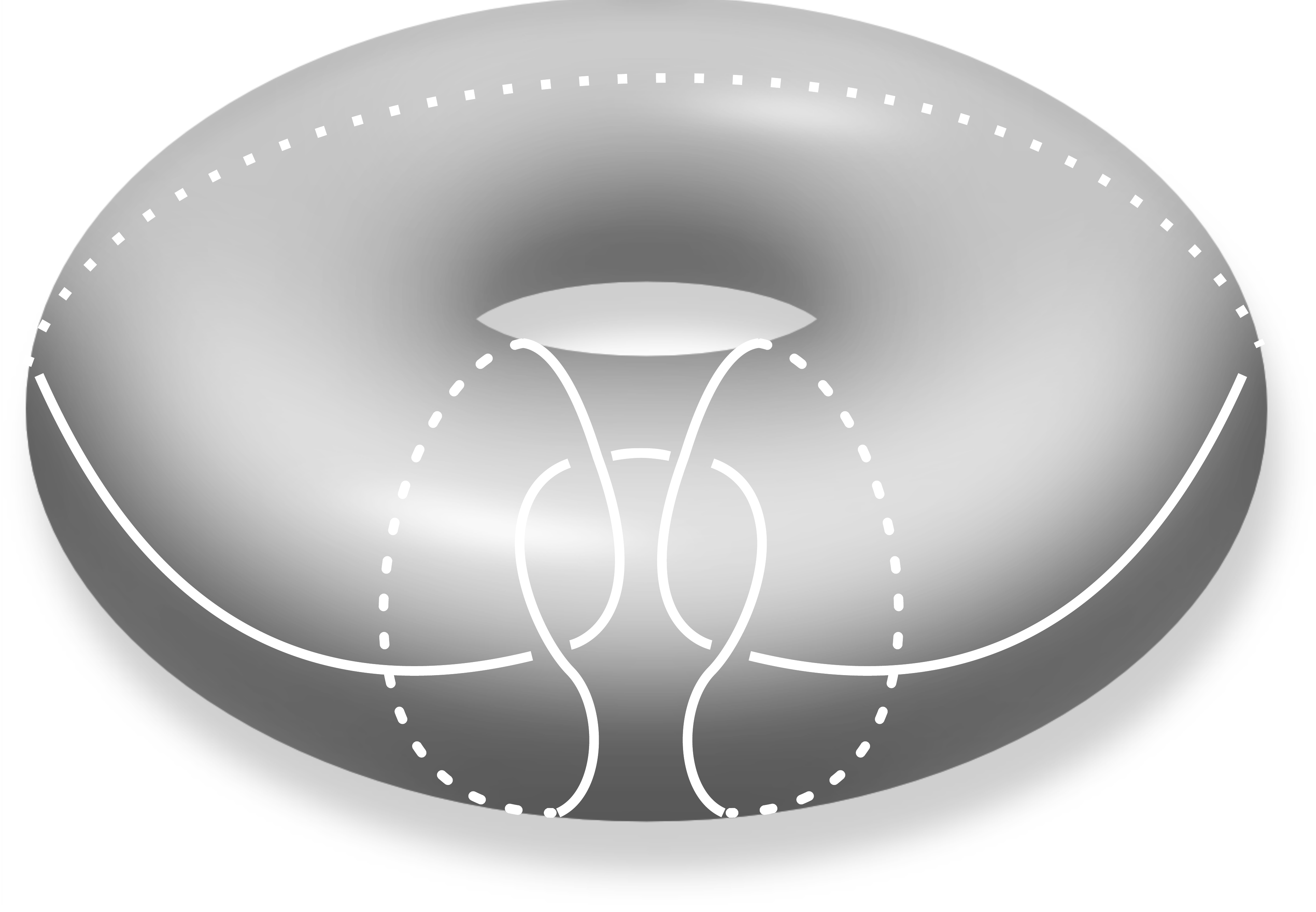}}}% b
\hspace{0.5cm}
\subfigure[]{
\resizebox*{!}{3cm}{\includegraphics{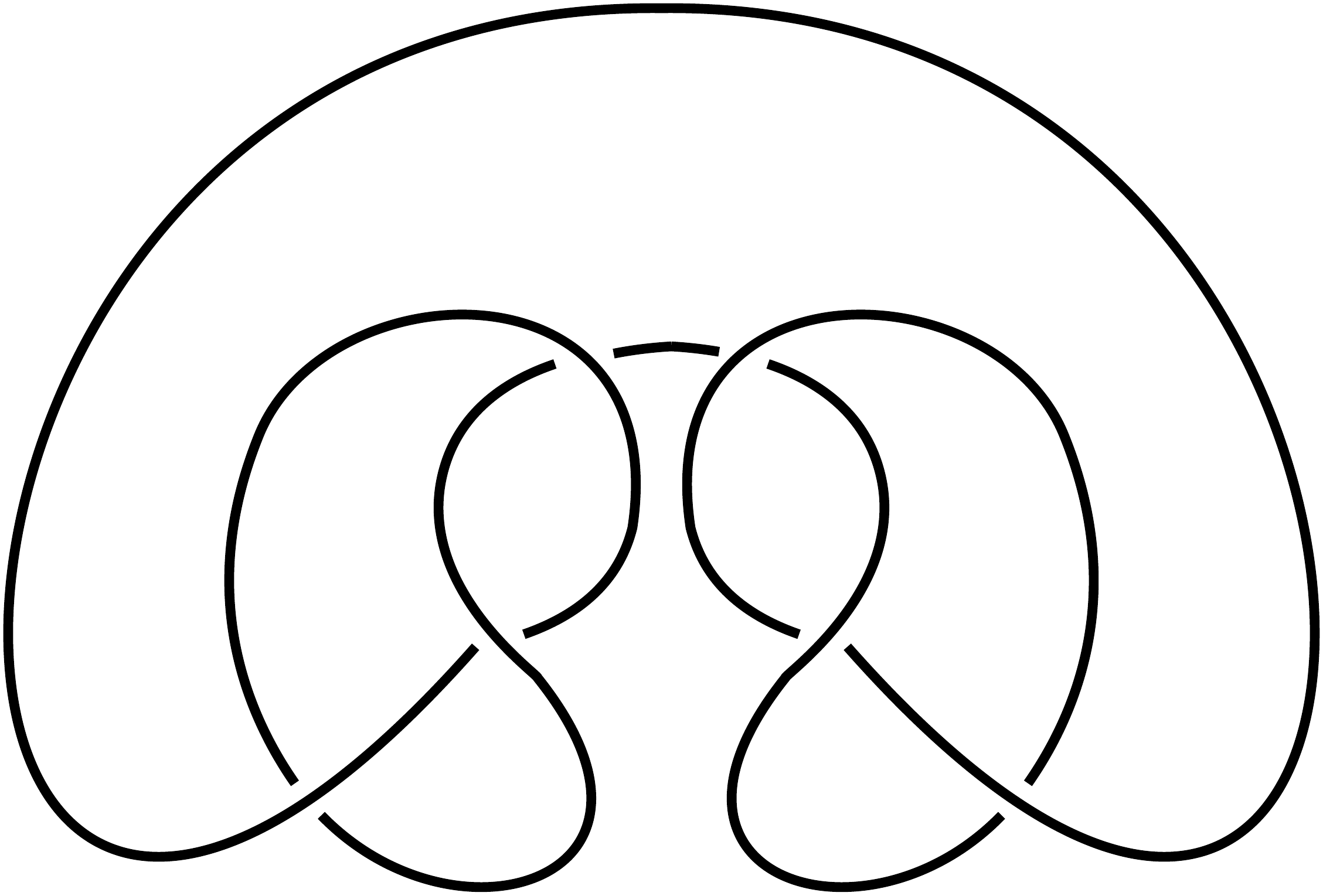}}}% c
\end{center}
\caption{`Knit' action represented as a) a period parallelogram, b) wrapped around a torus, and c) the associated knot.  Based on drawings by Grishanov \mbox{et al.} \cite{grishanovPart1}.}
\label{fig:grishanov}
\end{minipage}
\end{center}
\end{figure*}

Grishanov, Meshkov and Omelchenko  have examined the structure of machine-made textiles and classified these textiles using the ambient isotopy invariant of knots \cite{grishanovPart1}.  Textiles are typically made by repeating an arrangement of fibers in a periodic manner to cover an indefinitely large area.  The arrangement of fibers can be represented as a period parallelogram which is translated in two non-parallel directions to create an edge-to-edge tiling of the plane \cite{tilingsbook}.  Periodic repetition in textiles is a stronger property than just simple translation of a wallpaper decoration: fibers that terminate at the edges of the parallelogram must connect with fibers of adjacent copies.  This property, which Grishanov \mbox{et al.} call doubly periodic, can be visualized by joining opposite edges of the period parallelogram to form a torus (see Figure \ref{fig:torus}).    When wrapped around a torus, the fibers connect, forming a knot or a link as shown in Figure \ref{fig:grishanov}.  The toroidal representation also reduces a pattern description from infinite to finite size without loss of information, a key idea which we will revisit when describing our own model in Section \ref{sec:pairtraversal}.

\subsection{Braid Theory}
\label{sec:braids}

\begin{figure*}
\begin{center}
 \def\svgscale{0.4}
    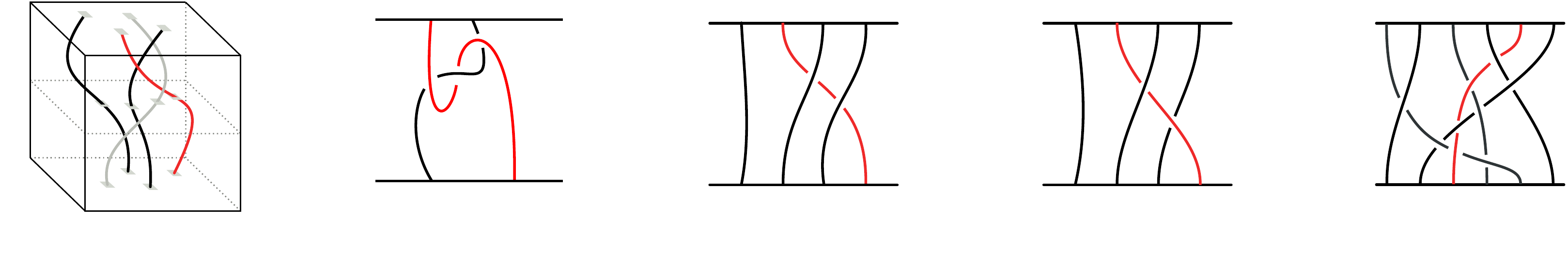
\caption{a) 3D braid between planes $A$ and $B$. Plane $X$ intersects each strand exactly once.  2D projections: b) Not a braid because the first strand can not be made monotonic without breaking the montonicity of the second strand, c) A non-alternating braid, d) and e) Alternating braids.}
\label{fig:braids}
\end{center}
\end{figure*}

The topology of bobbin lace has a direct relationship with braid theory.  With some minor exceptions\footnote{Techniques such as `sewings' do not produce braids because the threads cross back upon themselves.  Patterns involving a `lazy' crossing (a crossing in which two or more consecutive threads are treated as one and cross over or under other threads as a group) result in braids that are not alternating.  These techniques are outside of the scope of this paper.}, bobbin lace grounds are themselves braids, and, more specifically, they are alternating braids.  A \emph{braid} is defined mathematically as a set of $n$ `strands', each of which can be described as a curve in $\mathbb R ^3$.  The strands travel between two horizontal planes, $A$ and $B$, such that i) each strand originates at a unique point on plane $A$ and terminates at a unique point on plane $B$, ii) strands do not intersect one another or themselves, and iii) each strand is monotonic in the direction of a vertical line meaning that any horizontal plane $X$ between $A$ and $B$ will intersect each strand at just one point (see Figure \ref{fig:braids}a).  Braids are often represented as a 2D projection of the 3D object such that the start and end horizontal planes appears as horizontal lines. When two strands cross or wrap around each other, one strand is drawn as being above (solid) and the other below (broken).   The 2D projection is drawn in general form so that each strand has a unique start and end point and only two strands cross at any point (see Figures \ref{fig:braids}c, d and e).

In the 2D projection, strand positions are labelled $0$ to $n-1$ from left to right.  Using standard braid notation \cite{murasugi}, $\sigma_{i}$ represents a strand in position $i$ crossing \textbf{over} its neighbour to the right.  Similarly, $\sigma_{i}^{-1}$ represents a strand in position $i$ crossing \textbf{under} its neighbour to the right.  We will use this standard notation to represent the basic cross and twist actions of bobbin lace.   As mentioned in the introduction, bobbin lace actions are performed on four threads or two pairs of threads at a time.  A mathematically idealized thread with no thickness can be equated to a strand. If we label the pairs from left to right, the two adjacent pairs $i$ and $i+1$ correspond to the four threads in positions $2i$, $2i+1$, $2i+2$ and $2i+3$ where $i \in {0,1,2,...}$.  The cross action is represented by $\sigma_{2i+1}$ and the twist action is represented by $\sigma_{2i}^{-1}\sigma_{2i+2}^{-1}$. From this generalized description, we see that $\sigma_{x}$ will only occur for odd values of $x$ and $\sigma_{x}^{-1}$ will only occur for even values of $x$.

An \emph{alternating braid} is a braid in which each strand alternates going over and under the strands that it crosses (see Figures \ref{fig:braids}c and d).  Alternating braids are characterized by the property that the $\sigma$ generators for even positions have the opposite sign (superscript) from the $\sigma$ generators for odd positions \cite{murasugi}.  Given the generator representation for bobbin lace actions, we infer that any combination of cross and twist will result in an alternating braid.

\subsection{Systematic Explorations by Lacemakers}

Modern lacemakers have been discovering new patterns by exploring variations on traditional lace grounds.  One common approach is to take the pair working diagram from a traditional ground and look at different combinations of actions that can be performed when two pairs meet.  This combinatorial approach has been applied extensively for Rose ground (also known as Flanders ground, see Figure \ref{fig:oldpattern}c for an example) \cite{theuerkauf, viele}.  Dutch lacemaker, Pol, has developed a web-based tool for visualizing thousands of Rose ground variations \cite{pol}.

Another approach is to take a traditional ground and alter the location of the pins in the pattern.  This change does not affect the topology of the lace but has the geometric effect of changing the shape of the holes. The spaces between the threads contribute as much to the appearance of lace as the threads themselves. This approach is described in \cite{moderne} and may have been used by Kortelahti \cite{kortelahti}.

\section{Mathematical model}
\label{sec:pairtraversal}

\begin{figure*}
\begin{center}
\begin{minipage}{\textwidth}
\begin{center}
\subfigure[]{
\def\svgscale{0.2}
    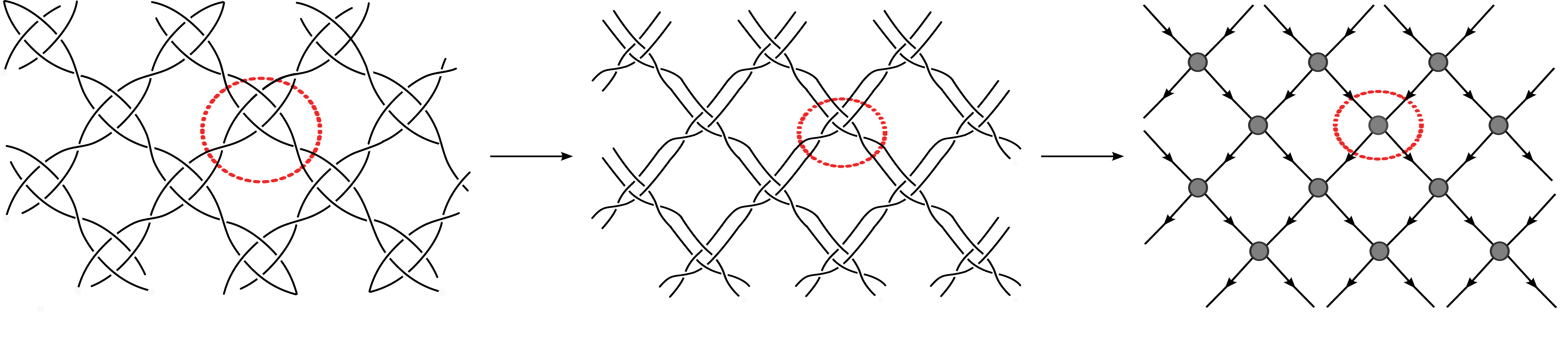
    }
\subfigure[]{
\def\svgscale{0.2}
    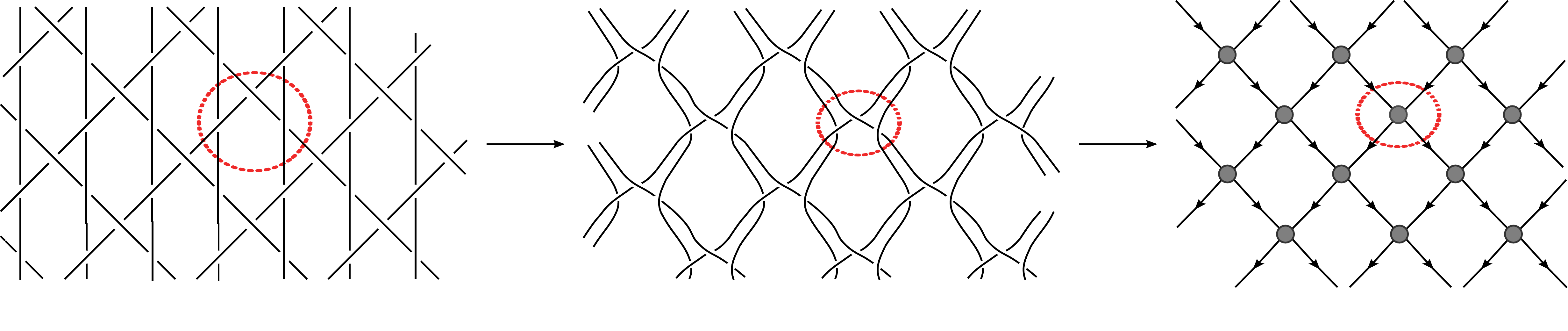
    }
\end{center}
\caption{Two example lace grounds a) Torchon Ground b) 's Gravenmoer Ground.
Left: Thread diagram of ground.
Middle: Exaggerated distance between pair crossings.
Right: Planar embedding of directed graph representing pair movement in lace ground.}
\label{fig:graph}
\end{minipage}
\end{center}
\end{figure*}

Consider four consecutive threads labelled $a, b, c, d$.  An \emph{interaction} is a sequence of actions on four threads.  The interaction begins when $b$ first crosses over $c$ and ends when any of the four threads crosses over or under a thread $x$ where $x \notin \{a,b,c,d\}$ \footnote{An interaction always begins with a cross action. The definition of an interaction should not be confused with the `closed' versus `open' methods used by lacemakers.  In the `closed' method, typically used on flat, cookie pillows, a lacemaker picks up four bobbins and always starts to braid with a cross (e.g., a half-stitch is CT). In the `open' method, used primarily on bolster style pillows, a lacemaker usually starts to braid with a twist (e.g., a half stitch is TC) but not always (e.g., the cloth stitch is CTC).    Both methods result in the same appearance of the final lace; the twist action can often be performed either at the end of one group of actions or the beginning of another without changing the end result.  Our definition of an interaction aligns more closely with the `closed' method because it has a consistent and easily defined boundary.}. Using this definition of an interaction, a lace ground can be represented as a directed graph.  The Torchon ground, shown in Figure \ref{fig:graph}a, will be used to illustrate.
The image on the left shows a diagram of the ground in which each line represents a thread and an oval is drawn around an interaction.
In the middle drawing, the distance between the interactions is exaggerated to emphasise the movement of pairs of threads from one interaction to the next.
On the right, each interaction is reduced to a dot - a vertex in the directed graph.
A pair of threads moving between two interactions is represented by a single line with an arrow indicating the direction of movement - an arc in the directed graph.
The resulting graph resembles the pair working diagram shown in Figure \ref{fig:oldpattern}b.

A bobbin lace ground can be represented as a pair $(\Pi(G^{\infty}), \zeta)$.  The first element, $\Pi(G^{\infty})$, which we shall call the \emph{ground embedding}, is an unbounded, directed graph, $G^{\infty}=(V,A)$, embedded in the plane.  The vertex set $V$ consists of all interactions and an arc $(u,v) \in A$ is a pair of thread segments travelling from interaction $u$ to interaction $v$. Usually, the graph $G$ is understood and we will shorten $\Pi(G^{\infty})$ to $\Pi^{\infty}$.  For Torchon ground, a representative subset of $\Pi^{\infty}$ is shown on the far right of Figure \ref{fig:graph}a.
The second element, $\zeta$, is a function from vertices to action sequences i.e., $\zeta : V \to \{C, T, p\}^{*}$.  For example, the action sequence for all Torchon ground interactions is $\zeta(v) = CTpCT$ for each $v \in V$.

Figure \ref{fig:graph}b shows another lace ground called \emph{'s Gravenmoer}. This second example has the same ground embedding $\Pi^{\infty}$ as Torchon but a different sequence of actions is performed at each interaction resulting in a significant difference in appearance.  Here $\zeta(v) = CTp$.

Lacemakers have already given some attention to the systematic analysis of $\zeta$.  In this paper, we focus on the motion of pairs of threads represented by the ground embedding $\Pi^{\infty}$.

In order to produce workable lace, the directed graph and its embedding must possess the five fundamental properties outlined below:

1) \textbf{2-Regular:} Two pairs of threads enter an interaction and two pairs leave making $G^{\infty}$ a directed graph of in-degree 2 and out-degree 2; such graphs are known as a 2-regular digraphs.

2) \textbf{Periodic:} A bobbin lace ground is a periodic tiling of the plane which, according to tiling theory, can be represented by a period parallelogram \cite{tilingsbook}.  Consequently, $\Pi^{\infty}$, the planar embedding of a periodic infinite graph, can be more succinctly represented by $\Pi^{\tau}$, a toroidal embedding of the directed graph $G^{\tau}=(V^{\tau}, A^{\tau})$ in which vertices $V^{\tau} \subset V$ are the set of interactions within the bounds of the period parallelogram (see Figure \ref{fig:tiling}b).

\begin{figure*}
\begin{center}
\begin{minipage}{\textwidth}
\begin{center}
\subfigure[]{
\resizebox*{!}{3.2cm}{\includegraphics{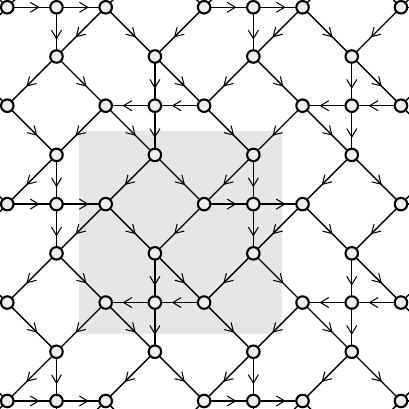}}}%
\hspace{1cm}
\subfigure[]{
\resizebox*{!}{3.2cm}{\includegraphics{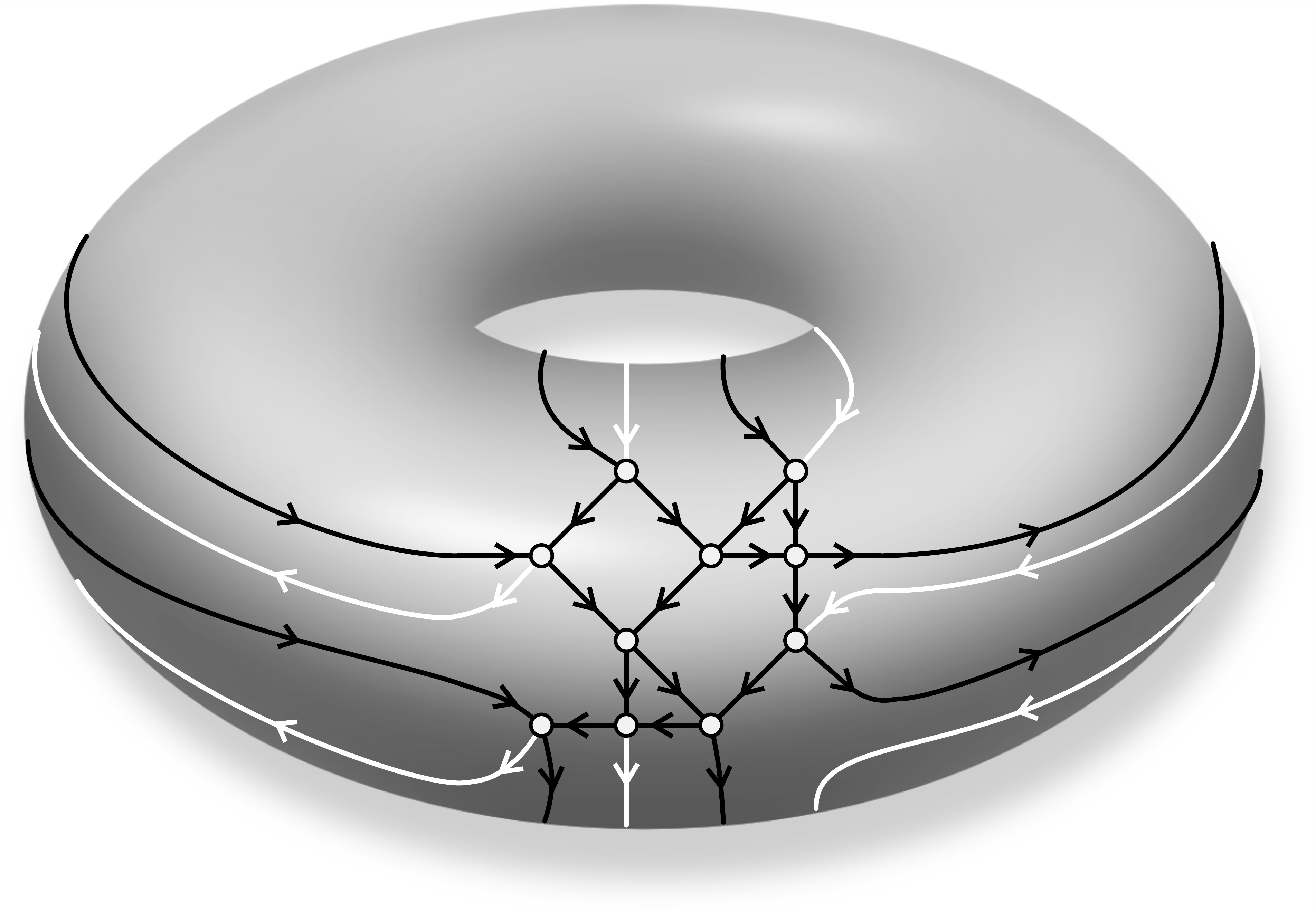}}}%
\end{center}
\caption{a) Periodic tiling of an indefinitely large region.  b) Toroidal representation of period parallelogram.}
\label{fig:tiling}
\end{minipage}
\end{center}
\end{figure*}

3) \textbf{Connected:} To ensure that threads in the lace hang together, the undirected graph $\widetilde{G}^{\infty}$, obtained from $G^{\infty}$ by ignoring the direction of the arcs, must be connected (see Figure \ref{fig:connected}).

A \emph{cycle} is a path which starts and ends at the same vertex and for which each vertex in the path, except for the first and last, appears only once.  A cycle embedded on surface $S$ is \emph{contractible} if cutting along the cycle disconnects $S$ into two parts: a topological disk and its complement. On the plane, all cyclical cuts disconnect the surface and hence are contractible.  On a surface of higher genus, this is not always true.  For example, cutting a torus along a meridian circle $M$ results in a single cylindrical surface.  After cutting $S$ along a \emph{non-contractible} cycle such as $M$, all pairs of points on the surface are still mutually reachable.

For a lace ground to be connected, $\widetilde{\Pi}^{\tau}$, the toroidal embedding of the undirected graph $\widetilde{G}^{\tau}$, must be connected \emph{and} must contain at least one non-contractible cycle (see Figure \ref{fig:torus}).  The latter property is true if $\widetilde{G}^{\tau}$ has a minimum genus of one \cite{gibbons}.

\begin{figure*}
\begin{center}
\begin{minipage}{\textwidth}
\begin{center}
\subfigure[]{
\resizebox*{!}{2.8cm}{\includegraphics{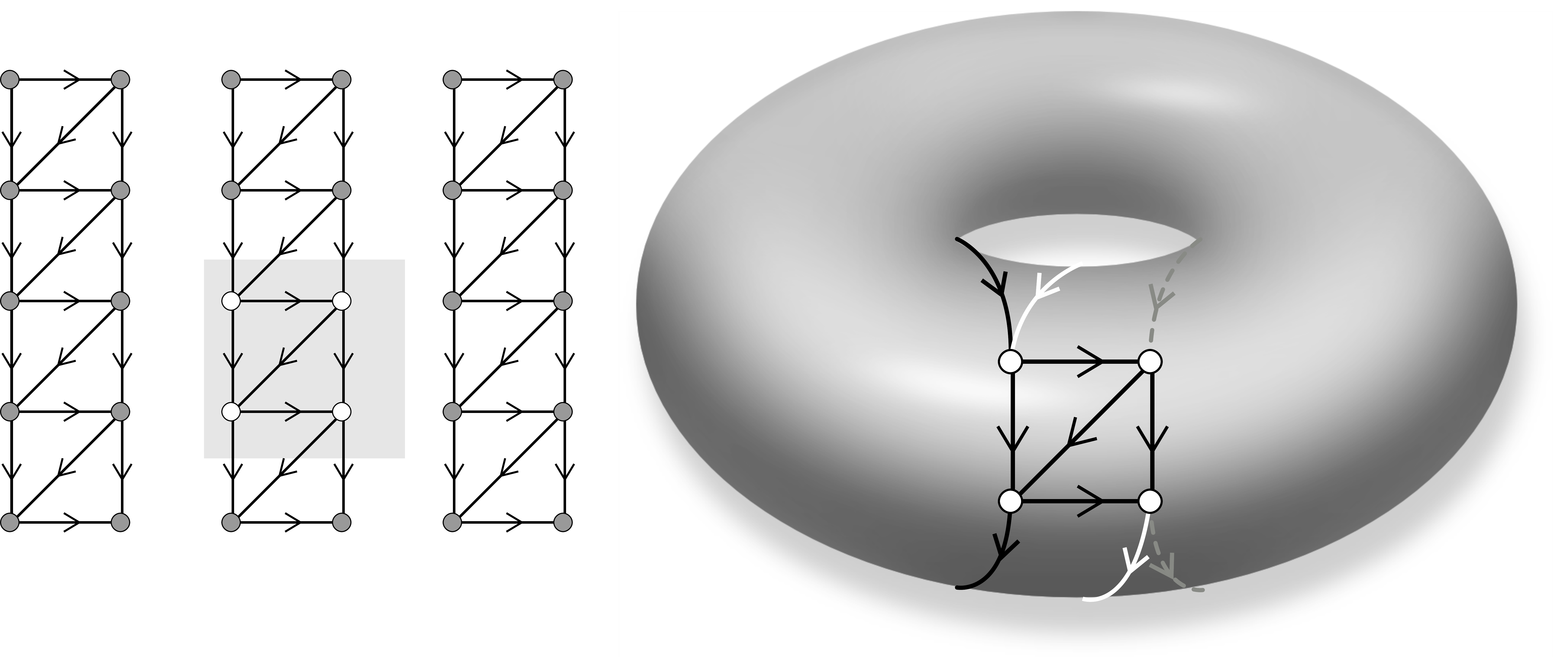}}}%
\hspace{0.5cm}
\subfigure[]{
\resizebox*{!}{2.8cm}{\includegraphics{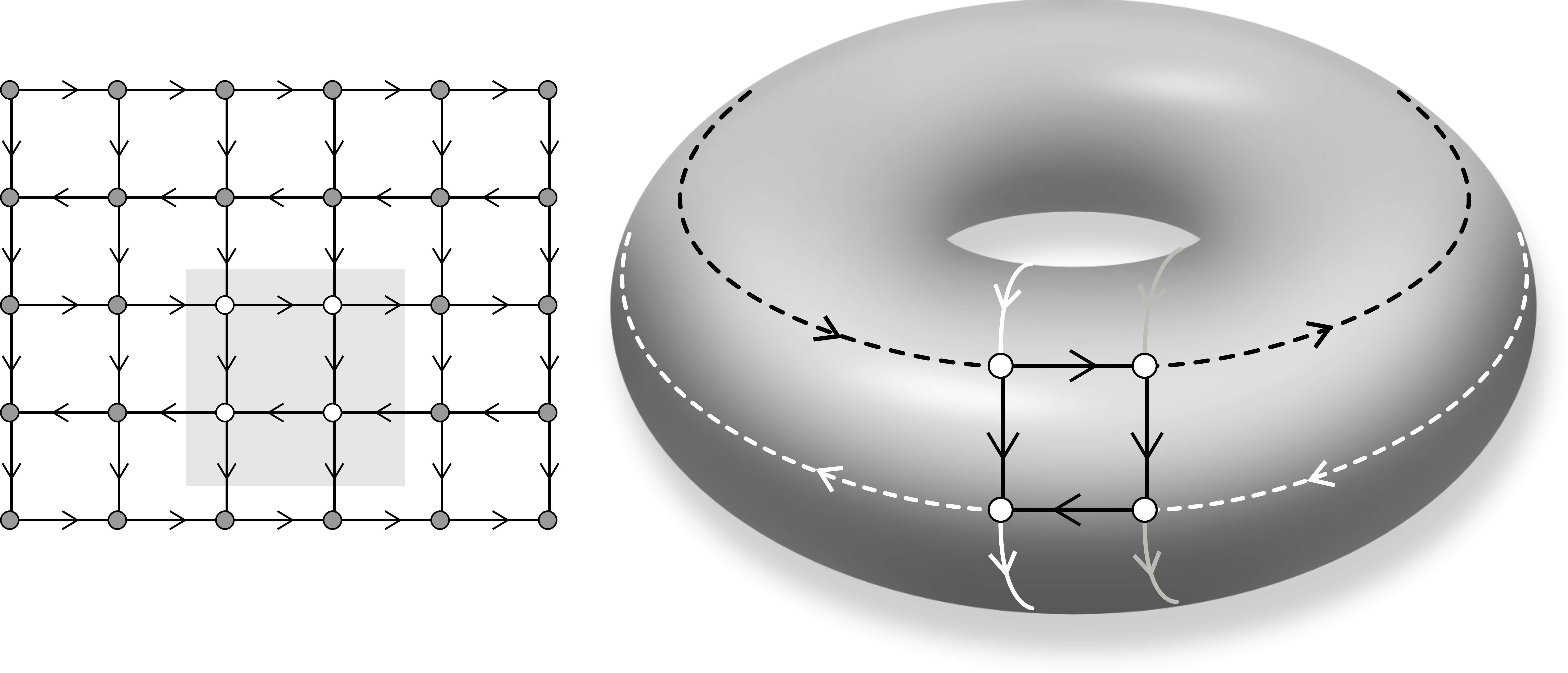}}}%
\end{center}
\caption{a) Connected on torus.  Disconnected on plane. b) Connected on torus and plane.}
\label{fig:connected}
\end{minipage}
\end{center}
\end{figure*}

4) \textbf{Monotonic:} As mentioned in Section \ref{sec:braids}, the definition of a mathematical braid requires that all strands are simultaneously monotonic with respect to the vertical axis. Bobbin lace grounds are topological braids and, by continuous deformation, can be arranged to meet this monotonic condition. However, in actual worked lace pieces, while there is a preferred downward (meridional) direction, threads may travel horizontally and even upward for short distances.  We would like a definition of monotonicity that can be tested without deforming the lace.  Another way to describe the monotonic condition of a braid is to say that sequential crossings will occur at increasingly lower vertical positions.  Starting at the top end of a strand and following it to its bottom end, we can assign a rank to each crossing based on the order in which it is encountered.    This provides a partial order for the crossings which, for a braid, must be consistent across all strands.  For example, in Figure \ref{fig:braids}b, tracing the first strand gives the order $A \prec B \prec C$ while tracing the second strand yields the order $C \prec B \prec A$.  The contradiction between these two orders indicates that the diagram does not represent a braid.

A \emph{directed cycle} is defined in the same manner as a cycle but the path must follow the direction of the arcs.  A directed graph is \emph{acyclic} if it does not contain any directed cycles.

From graph theory, we know that a directed graph representing a partial ordering (such as a Hasse diagram) is acyclic \cite{grimaldi}.  Further, when a toroidal embedding is created from the period parallelogram of a planar graph embedding, directed cycles in the plane become contractible directed cycles in the torus \cite{grishanovPart2}.  Applying this to the ground embedding, we see that $G^{\infty}$ must be acyclic and $\Pi^{\tau}$ must not have contractible directed cycles.

5) \textbf{Conserved:} Loose ends, caused by cutting threads or adding new ones, are undesirable because they inhibit the speed of working, can fray or stick out in an unslightly manner and, most importantly, degrade the strength of the fabric\footnote{One exception to the conservation of threads is the gimp thread which is a decoration and does not typically contribute to the structure of the lace.  Even so, adding and removing gimp threads at each repeat is quite tedious and to be avoided.}.  For our model, this means that, once started, a rectangular patch of width $w$ can be worked to any length without the addition or removal of threads (assuming that sufficient thread has been wound around the bobbins). We refer to this ability to extend the pattern indefinitely in the direction of monotonicity using a fixed set of threads as the \emph{conservation of threads}.  If we draw any vertical line on the pattern (that is, a line in the direction of monotonicity) then, in order for the threads to be conserved, the number of threads crossing the line from left-to-right must equal the number of threads crossing in the opposite direction.  We will formalize and prove this statement in Section \ref{sec:theorem}.

\subsection{Conservation Theorem}
\label{sec:theorem}

In order to state and prove the theorem of thread conservation, we shall first prove two auxiliary lemmas on which the proof of our theorem depends.

\begin{figure*}
\begin{center}
\subfigure[]{
\resizebox*{!}{2.5cm}{\includegraphics{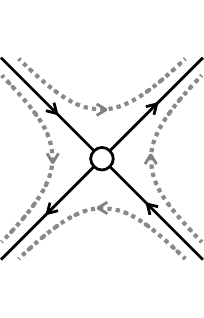}}}%
\hspace{0.5cm}
\subfigure[]{
\resizebox*{!}{2.5cm}{\includegraphics{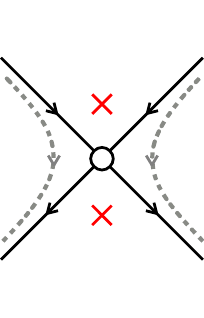}}}%
\hspace{0.5cm}
\subfigure[]{
\resizebox*{!}{2.5cm}{\includegraphics{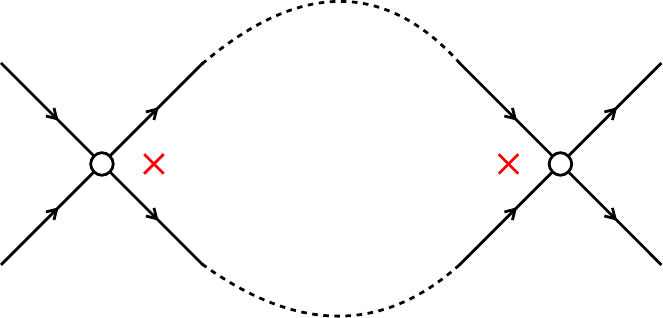}}}%
\caption{Arc arrangements at a vertex: a) Alternating and b) Consecutive. c) Incoming and outgoing blocking vertices on a face boundary.}
\label{fig:alt_cons}
\end{center}
\end{figure*}

A \emph{directed circuit} is a directed path that starts and ends at the same vertex and each arc in the path is unique. In contrast to a directed cycle, a vertex in a directed circuit may be visited more than once.
Arcs in an embedding of a 2-regular digraph can be arranged in one of two possible ways around a vertex: either \emph{rotationally alternating} in which arcs alternate between incoming and outgoing directions or \emph{rotationally consecutive} with arcs in the order incoming, incoming, outgoing, outgoing (see Figures \ref{fig:alt_cons}a \& \ref{fig:alt_cons}b).  We shall refer to a vertex, with reference to its arc arrangement, as a \emph{rotationally alternating vertex} or a \emph{rotationally consecutive vertex}.

\begin{lemma}
%Incoming arcs at a vertex must be arranged consecutively (in clockwise or counter clockwise order) in order for a toroidal embedding of a 2-regular digraph $T$ to be free of contractible directed cycles.
Let $\Pi^{g}$ be a 2-regular digraph with $n$ vertices embedded on an orientable surface of genus $g$.  If $\Pi^{g}$ contains no contractible directed cycles, then it must have at least $2-2g+n$ rotationally consecutive vertices.
\label{theor:nocontract}
\end{lemma}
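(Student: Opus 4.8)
The plan is to count the faces of the embedding. Since $\Pi^{g}$ is $2$-regular, every vertex has in-degree and out-degree $2$, so $|A| = 2n$; applying Euler's formula to the cellular embedding $\Pi^{g}$ on the orientable genus-$g$ surface gives $F = |A| - n + 2 - 2g = n + 2 - 2g = 2 - 2g + n$ faces. Thus the quantity in the statement is exactly the number of faces, and it suffices to prove that $\Pi^{g}$ has at least $F$ rotationally consecutive vertices. (For a non-cellular embedding Euler's inequality only makes $F$ larger, so nothing is lost by assuming cellularity, which holds anyway for the toroidal embeddings arising from period parallelograms.)

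I would establish this bound by charging faces to rotationally consecutive vertices through the corners of the face boundary walks. Fix an orientation of the surface and trace the boundary walk of each face $f$ with $f$ on the left; each visit of the walk to a vertex is a \emph{corner} of $f$, and the walk enters that corner along one arc-end and leaves along the next arc-end in the rotation there. Viewing the two arcs of a corner as directed, call the corner \emph{coherent} if the walk passes through it consistently with these directions (from the head of an in-arc to the tail of an out-arc, or the reverse), and a \emph{reversal} otherwise (two heads, or two tails). Inspecting the two possible cyclic orders of the four arc-ends at a vertex, a rotationally alternating vertex (order in, out, in, out) has all four of its corners coherent, whereas a rotationally consecutive vertex (order in, in, out, out) has exactly two reversal corners. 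Since each corner lies in exactly one face, summing over all faces gives
\[
\sum_{f} r(f) = 2c ,
\]
where $r(f)$ denotes the number of reversal corners of $f$ and $c$ the number of rotationally consecutive vertices.

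The crux is then to show that $r(f) \ge 1$ for every face $f$. If instead $r(f) = 0$, the boundary walk of $f$ traverses every arc in the direction of its orientation, so it is a directed closed walk; any directed closed walk contains a directed cycle $C$; and since $f$ is a $2$-cell, $C$ bounds a disk, i.e. it is a contractible directed cycle, contradicting the hypothesis. Moreover $r(f)$ is always even, since the direction in which the walk is travelling flips precisely at the reversal corners and must return to its initial value after one circuit of $f$. Hence $r(f) \ge 2$ for every face, and the displayed identity yields $2c = \sum_{f} r(f) \ge 2F$, that is $c \ge F = 2 - 2g + n$.

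The step I expect to be the real obstacle is the contractibility claim. When the boundary walk of $f$ is a simple cycle it is immediate, since that cycle literally bounds the $2$-cell $f$; but when the boundary walk repeats a vertex one must first extract a \emph{simple} directed cycle and then show that it still bounds a disk. I would handle this by cutting $f$ along arcs that join repeated vertices to themselves, peeling off sub-disks until a simple directed cycle bounding a disk is exposed. Making this rigorous — in particular, ensuring that the cycle finally isolated is contractible in the ambient surface and not merely null-homotopic relative to the auxiliary cutting arcs — is the delicate point, and is where a careful minimality choice (such as taking an innermost repeated-vertex sub-cycle) will be needed.
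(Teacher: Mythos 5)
Your argument is essentially the paper's own proof: the same Euler count $F = 2-2g+n$, followed by the same double count in which each rotationally consecutive vertex contributes exactly two ``reversal''/blocking incidences while every face must receive at least two (your parity step playing the role of the paper's balancing argument), giving $c \ge F$. Your corner-counting formulation is if anything the cleaner version, and the delicate point you flag --- extracting a contractible directed cycle when a coherent face boundary walk repeats a vertex --- is precisely the step the paper glosses over by asserting outright that such a boundary ``is a contractible directed cycle.''
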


\begin{proof}
From the Euler characteristic, we have the well known result that an embedding on an orientable surface has $2-2g+n$ faces. We shall prove that there are at least as many rotationally consecutive vertices as faces.  Consider a face $F$ of $\Pi^{g}$. For every vertex $a$ with a rotationally alternating arc configuration, the arcs incident to $F$ at $a$ enter and exit with the same orientation. If every vertex in the boundary of $F$ has a rotationally alternating arc configuration, then the boundary is a contractible directed cycle, a condition we wish to avoid.  Therefore we may assume that $F$ has at least one vertex $c$ with a rotationally consecutive arc configuration such that the arcs incident on $F$ at $c$ are either both incoming or both outgoing.
Such a vertex $c$ prevents a directed cycle for two (of the at most four) incident faces, one blocked by its incoming edges and one by its outgoing edges.
Since an outgoing arc at $c$ is an incoming arc at the next vertex in the boundary of $F$, it follows that the two outgoing arcs at $c$ must be balanced by two incoming arcs at one or two vertices in $F$ (see Figure \ref{fig:alt_cons}c). The same balancing argument holds for two incoming arcs at $c$.  In summary, each rotationally consecutive vertex blocks two faces but each face must have at least two rotationally consecutive vertices.  Therefore there are at least as many rotationally consecutive vertices as faces.
\end{proof}

While requiring a minimum number of vertices to have consecutive incoming arcs is a necessary condition to avoid contractible directed cycles (as proven above), it is not a sufficient condition.

By applying Lemma \ref{theor:nocontract} to planar ($g = 0$) and toroidal ($g=1$) embeddings, we derive the following corollary:
\begin{corollary} A finite 2-regular digraph embedded on the plane will always have a contractible directed cycle bounding a face.
To be free of contractible directed cycles, all vertices of a 2-regular digraph embedded on the torus must be rotationally consecutive.
\label{theo:allconsecutive}
\end{corollary}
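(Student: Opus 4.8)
The plan is to obtain both assertions as immediate specializations of Lemma~\ref{theor:nocontract}, by plugging in the genus and comparing the resulting lower bound on the number of rotationally consecutive vertices against the total vertex count $n$. Recall that every vertex of a $2$-regular digraph embedding is either rotationally alternating or rotationally consecutive, so a digraph on $n$ vertices has at most $n$ rotationally consecutive vertices; this trivial upper bound is the only extra ingredient.

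For the planar case I would set $g=0$. The lemma (more precisely, the dichotomy established in its proof: each face either has a contractible directed cycle as its boundary, or contributes at least two rotationally consecutive vertices, and each such vertex is charged by at most two faces) says that if \emph{no} face boundary is a contractible directed cycle, then the embedding has at least $2-2g+n = n+2$ rotationally consecutive vertices. Since $n+2 > n$, this is impossible; hence some face of the embedding must be bounded by a contractible directed cycle, which is the first sentence of the corollary. One small point of care: the bare statement of the lemma only yields the \emph{existence} of a contractible directed cycle, whereas here we want one \emph{bounding a face}, so to get the stronger form I would refer back to the construction in the lemma's proof, where the offending cycle is produced precisely as a face boundary.

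For the toroidal case I would set $g=1$, so that $2-2g+n = n$. The lemma then states that a $2$-regular digraph embedded on the torus with no contractible directed cycles has at least $n$ rotationally consecutive vertices; since there are exactly $n$ vertices in all, every vertex must be rotationally consecutive, which is the second sentence.

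I do not anticipate a genuine obstacle: each part is a one-line substitution into Lemma~\ref{theor:nocontract} combined with the bound ``at most $n$ rotationally consecutive vertices.'' The only things worth a sentence of comment are (i) upgrading ``there exists a contractible directed cycle'' to ``one bounding a face'' in the planar case, as noted above, and (ii) stressing — consistent with the remark immediately following the lemma — that the toroidal statement is only a \emph{necessary} condition, so no converse (all vertices rotationally consecutive $\Rightarrow$ no contractible directed cycles) is being claimed.
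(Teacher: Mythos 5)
Your proposal is correct and follows essentially the same route as the paper, which derives the corollary precisely by applying Lemma~\ref{theor:nocontract} with $g=0$ and $g=1$ and comparing the resulting bound $2-2g+n$ against the total of $n$ vertices. Your extra remarks --- that the planar cycle is in fact a face boundary (read off from the lemma's proof) and that the toroidal condition is only necessary --- match the paper's intent and the remark it makes immediately after the lemma.
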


\begin{figure*}
\begin{center}
\subfigure[]{
\resizebox*{!}{3cm}{\includegraphics{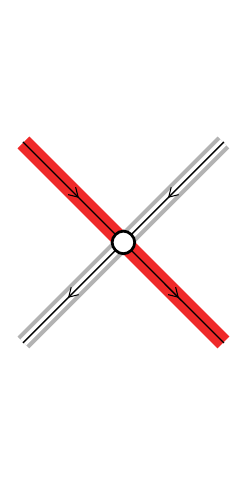}}}%
\hspace{0.5cm}
\subfigure[]{
\resizebox*{!}{3cm}{\includegraphics{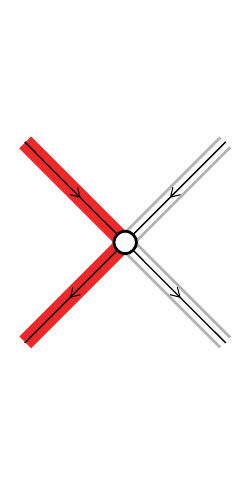}}}%
\hspace{0.5cm}
\subfigure[]{
\resizebox*{!}{3cm}{\includegraphics{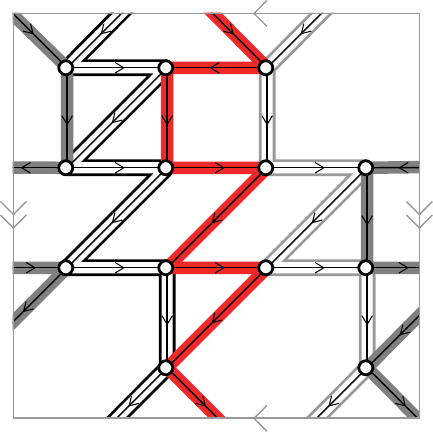}}}%
\hspace{0.5cm}
\subfigure[]{
\resizebox*{!}{3cm}{\includegraphics{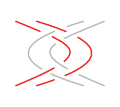}}}%
\caption{a) Transverse intersection. b) Non-transverse intersection. c) Toroidal graph partitioned into four directed circuits by a non-transversal weave. d) $\zeta(v) = CTCCTC$.}
\label{fig:crossing}
\end{center}
\end{figure*}

A \emph{transverse intersection} occurs when one path lies across another path in an embedding as illustrated in Figure \ref{fig:crossing}a.  In contrast, a \emph{non-transverse intersection} occurs when two paths meet but continue without crossing (Figure \ref{fig:crossing}b).

\begin{lemma}
Let $\Pi^{\tau}$ be a 2-regular digraph $G(V,A)$ embedded on a torus.  If $\Pi^{\tau}$ does not contain any contractible directed cycles then the arcs of $G(V,A)$ can be partitioned into a set of directed circuits such that no circuit participates in a transverse intersection.
\label{theor:partition}
\end{lemma}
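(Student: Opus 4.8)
The plan is to build the partition by performing a canonical \emph{non-transverse smoothing} at every vertex of $G$. First I would apply Corollary~\ref{theo:allconsecutive}: because $\Pi^{\tau}$ contains no contractible directed cycles, every vertex of $G$ is rotationally consecutive. Fix a vertex $v$ and list its four incident arc-ends in the cyclic order prescribed by the rotation of the embedding; rotational consecutiveness means this order is $i_{1}, i_{2}, o_{1}, o_{2}$, with $i_{1}, i_{2}$ the incoming ends and $o_{1}, o_{2}$ the outgoing ends. I would then pair each incoming end with the outgoing end adjacent to it in the rotation, namely $i_{2}\leftrightarrow o_{1}$ and $i_{1}\leftrightarrow o_{2}$. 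Viewing these pairs as two strands threaded through a small disk around $v$, the boundary chords $\{i_{2},o_{1}\}$ and $\{i_{1},o_{2}\}$ each connect two cyclically consecutive points and so are disjoint; this is exactly the non-transverse routing of two strands through $v$. The only other matching, $i_{1}\leftrightarrow o_{1}$ and $i_{2}\leftrightarrow o_{2}$, gives chords that separate each other and hence a transverse intersection, so the non-transverse choice is forced --- this is where rotational consecutiveness (equivalently, via Corollary~\ref{theo:allconsecutive}, the no-contractible-directed-cycle hypothesis) is essential, since at a rotationally alternating vertex both matchings would be non-transverse.

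Next I would globalize. Define $\mathrm{next}\colon A \to A$ by taking $\mathrm{next}(a)$ to be the outgoing arc whose tail-end at the head vertex of $a$ is paired with the head-end of $a$. Since the pairing at each vertex is a bijection between its two incoming ends and its two outgoing ends, $\mathrm{next}$ admits an inverse $\mathrm{prev}$ obtained by pairing at tail vertices, so $\mathrm{next}$ is a permutation of the finite set $A$. Its cycles therefore partition $A$, and a cycle $a_{0}, a_{1}=\mathrm{next}(a_{0}), a_{2}, \dots$ is a closed directed walk with pairwise distinct arcs (the head of $a_{j}$ equals the tail of $a_{j+1}$ by construction), i.e. a directed circuit in the sense defined above; vertices may recur along it, but arcs may not.

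Finally I would verify the non-transversality. In the embedding two arcs meet only at a shared endpoint, so every intersection of these circuits occurs at a vertex, and at each vertex the two strands that pass through it are, by construction, the non-crossing pair $i_{2}\to o_{1}$ and $i_{1}\to o_{2}$ --- regardless of whether those two strands lie on different circuits or on a single circuit that traverses the vertex twice. Hence no circuit participates in a transverse intersection, which is the claim.

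The main obstacle I anticipate is not the construction itself but the care needed to phrase the pairing and the word ``transverse'' in terms of the rotation system and arc-ends (rather than arcs), so that loops and parallel arcs are handled correctly, and to pin down precisely why the adjacent-in-rotation matching is the unique non-transverse one at a rotationally consecutive vertex.
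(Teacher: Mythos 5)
Your proposal is correct and follows essentially the same route as the paper: both invoke Corollary~\ref{theo:allconsecutive} to get that every vertex is rotationally consecutive, and both form the circuits by pairing each incoming arc with the rotationally adjacent outgoing arc at its head vertex, which guarantees the non-transverse routing. Your packaging of this as the cycle decomposition of a permutation $\mathrm{next}\colon A\to A$ is just a cleaner formalization of the paper's iterative trace-a-circuit-and-remove-its-arcs argument; the resulting partition is the same.
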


\begin{proof}
Begin circuit $K$ by selecting any arc $(u,v)\in A$.  Proceed to the vertex at the far end of the arc, $v$, and add the unique, rotationally consecutive outgoing arc $(v,w) \in A$ to $K$.  The existence and uniqueness of $(v,w)$ is guaranteed by Corollary \ref{theo:allconsecutive}.  The process is repeated using vertex $w$ until $K$ returns to its initial vertex $u$ via arc $(z,u) \in A$ where $(z,u)$ is rotationally consecutive to $(u,v)$ at vertex $u$. If $K$ returns to its initial vertex $u$ via the arc $(z',u)$ which is not rotationally consecutive to $(u,v)$, continue tracing the circuit until it returns to $u$ a second time. The circuit $K$ is guaranteed to complete because there are an equal number of incoming and outgoing arcs at each vertex and a finite number of vertices.  Remove all arcs traversed by $K$ from $\Pi^{\tau}$. The resulting embedding still has an equal number of incoming and outgoing arcs and the relative rotational order of the remaining arcs is unchanged so the process can be repeated until zero arcs remain.  The resulting set of circuits will not intersect each other transversely because at each stage, the selected arcs were consecutive.
\end{proof}

Figure \ref{fig:crossing}c shows an example of the non-transversal partitioning of a graph embedding.

\begin{figure*}
\begin{center}
\subfigure[]{
 \def\svgscale{0.09}
    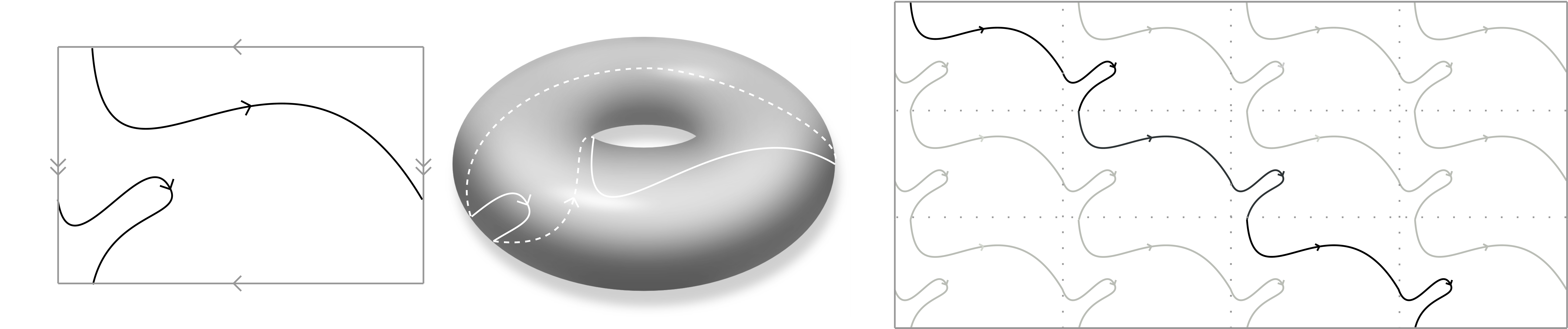 %a
}
\subfigure[]{
 \def\svgscale{0.09}
    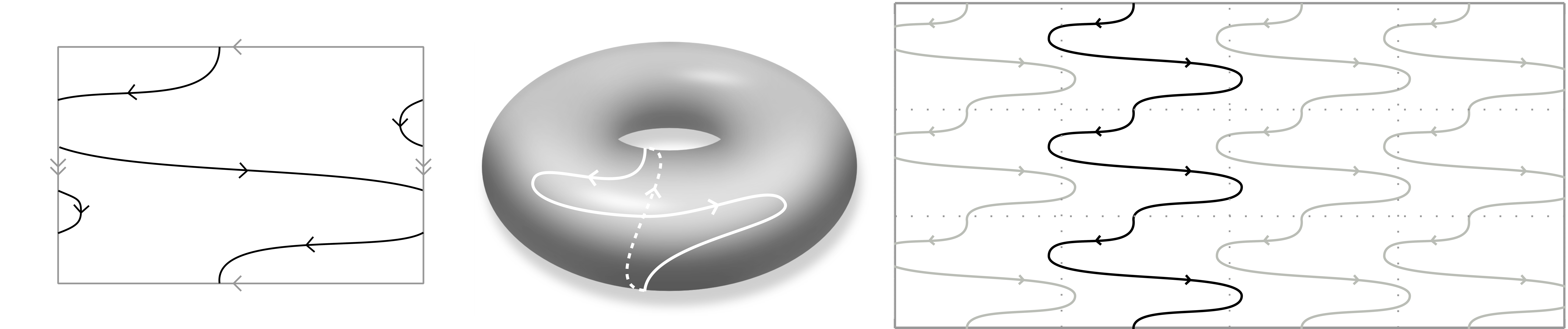 %a
}
\caption{Circuit on a torus. a) Circuit wraps once around torus longitudinally and once meridionally. b) Circuit wraps once around the torus meridionally, no longitudinal wrapping.}
\label{fig:wrapping}
\end{center}
\end{figure*}

\begin{theorem}
Let $\Pi^{\tau}$ be a toroidal embedding of a 2-regular digraph with no contractible directed cycles.  The embedding is thread conserving if and only if $\Pi^{\tau}$, when cut in the direction of monotonicity, has the same number of arcs crossing the cut in one direction as in the opposite direction.
\end{theorem}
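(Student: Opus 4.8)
The plan is to reduce the statement to the circuit decomposition of Lemma~\ref{theor:partition} together with the homology of the torus. First note that a pair of thread segments travels together along each arc, so the number of threads crossing any cut is exactly twice the number of arcs crossing it and the left-to-right versus right-to-left balance is the same for threads and for arcs; it therefore suffices to reason about arcs. By Lemma~\ref{theor:partition} the arc set partitions into directed circuits $K_1,\dots,K_r$ that pairwise (and individually) admit no transverse intersections. Each $K_i$ is a closed curve on the torus and so carries a homology class, which I will record by its winding numbers $(\ell_i,t_i)$ around the longitude and the meridian, the meridian being the monotonic direction. The forward implication of the theorem will be immediate once the structural facts below are in place; the converse --- that a \emph{zero} signed crossing count forces every circuit individually to have no sideways drift, with no cancellation among the circuits --- is where the real work lies.

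Two structural facts drive the argument. First, by the monotonicity property (item~4 above), the ground may be continuously deformed so that the flow along every arc is strictly downward; then traversing a directed circuit strictly decreases height without bound in the periodic plane, so no $K_i$ is null-homologous and each has strictly positive meridional winding $t_i>0$, whence the circuits are all co-oriented with respect to the meridian. Second, the $K_i$ are essential closed curves on a torus with no transverse intersections, so they can be isotoped to be mutually non-crossing; consequently there is a single primitive class $(\ell,t)$ with $t>0$ and positive integers $c_1,\dots,c_r$ such that $(\ell_i,t_i)=c_i(\ell,t)$ for every $i$. Turning this second fact into a clean argument --- in particular dealing with the circuits of Lemma~\ref{theor:partition}, which need not be simple closed curves --- is the step I expect to be the main obstacle; the remainder is bookkeeping.

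Granting these, the computation is short. A cut in the direction of monotonicity is a meridional simple closed curve $\gamma$ of class $(0,1)$, and the signed number of crossings of an arc of $K_i$ with $\gamma$ (left-to-right counted as $+1$) is the algebraic intersection number of $(\ell_i,t_i)$ with $(0,1)$, which is $\ell_i$. Summing over all arcs, the net crossing count of $\gamma$ equals $\sum_{i=1}^{r}\ell_i=\bigl(\sum_{i=1}^{r}c_i\bigr)\ell$; since every $c_i\ge 1$ the coefficient $\sum_i c_i$ is positive, so this count vanishes if and only if $\ell=0$, that is, if and only if every $K_i$ is purely meridional of class $(0,c_i)$. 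It then remains to match ``every circuit is purely meridional'' with ``thread conserving'': if every $K_i$ has class $(0,c_i)$ then one traversal of $K_i$ in the periodic plane is a translation purely in the monotonic direction, so $K_i$ --- and hence the whole pattern, a finite union of such circuits --- stays within a vertical strip whose width $w$ may be taken to be that of the period parallelogram, and the pattern can then be worked downward indefinitely using only those $w$ threads; conversely, a circuit with $\ell_i\neq 0$ drifts a fixed nonzero amount sideways per traversal and so eventually leaves every strip of bounded width, forcing threads to be added or removed. Combining the two equivalences gives the biconditional.
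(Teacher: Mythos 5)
Your proposal is correct and follows essentially the same route as the paper: partition the arcs into non-transverse directed circuits via Lemma~\ref{theor:partition}, use the homology of the torus to identify the net signed crossings of a meridional cut with the longitudinal winding, and interpret nonzero longitudinal winding as horizontal drift of the lifted path in the plane, which is incompatible with working a fixed-width strip with a fixed set of threads. The step you single out as the main obstacle (that the circuits, which need not be simple, all carry proportional, co-oriented classes so no cancellation occurs) is exactly the point the paper dispatches with the single assertion that non-transverse circuits all wrap the torus the same number of times, and the only other difference is cosmetic: you count two threads per arc directly, while the paper realizes the circuits as thread-pair paths via $\zeta(v)=CTCpCTC$ and then invokes conservation of braid strands to cover arbitrary $\zeta$.
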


\begin{proof}
Due to Lemma \ref{theor:partition}, the arcs of $\Pi^{\tau}$ can be partitioned into a set of non-transverse directed circuits, each of which is non-contractible.  By using $\zeta(v) = CTCpCTC$ for all vertices in $\Pi^{\tau}$ (see Figure \ref{fig:crossing}d), we can equate a circuit in this partition with the path followed by a pair of threads.
Using the homology group of the torus, $\mathbb{Z}\times\mathbb{Z}$, we see that a directed circuit that wraps longitudinally around the torus $L$ times will cross a meridian circle a net of $L$ times, where a crossing in the direction of the circuit is positive and a crossing in the opposite direction is negative.   As illustrated in Figure \ref{fig:wrapping}, a circuit that longitudinally wraps $L$ times  around $\Pi^{\tau}$  corresponds to a path in the associated $\Pi^{\infty}$ with a horizontal displacement between start and finish vertices of $L$ times the width of the period parallelogram.  For a constant number of threads to cover a rectangle of fixed width and indeterminate length, there must be no horizontal displacement of the path from one repeat to the next, corresponding to $L=0$.  Therefore, the circuit must cross any meridian circle (where the meridian is the direction of monotonicity) a net of zero times.  Since the circuits in the partition do not intersect each other transversely, all circuits in $\Pi^{\tau}$ will wrap around the torus the same number of times.
This result can be generalised for any $\zeta$ function by noting that, within an interaction, the threads are the manifestation of a mathematical braid which, by definition, conserves the number of strands in the braid. Therefore, replacing $\zeta(v) = CTCpCTC$ with any $\zeta(v)$ will not alter the number of threads required.
\end{proof}

\section{Application of model}
\label{sec:variations}

Now that we have a mathematical model, we shall apply it to the generation of some lace grounds.  Ideally, one would pick a number of vertices and generate all $\Pi^{\tau}$ of this size satisfying the fundamental properties specified in Section \ref{sec:pairtraversal}.  Unfortunately, such a general approach requires solving several intractable problems, the scale of which would limit our solution to very small numbers of vertices.  As a result, we tried a simpler approach that did not find all possible solutions but did generate an interesting set of results and revealed some novel ground patterns.

\begin{figure*}
\begin{center}
 \def\svgscale{0.9}
    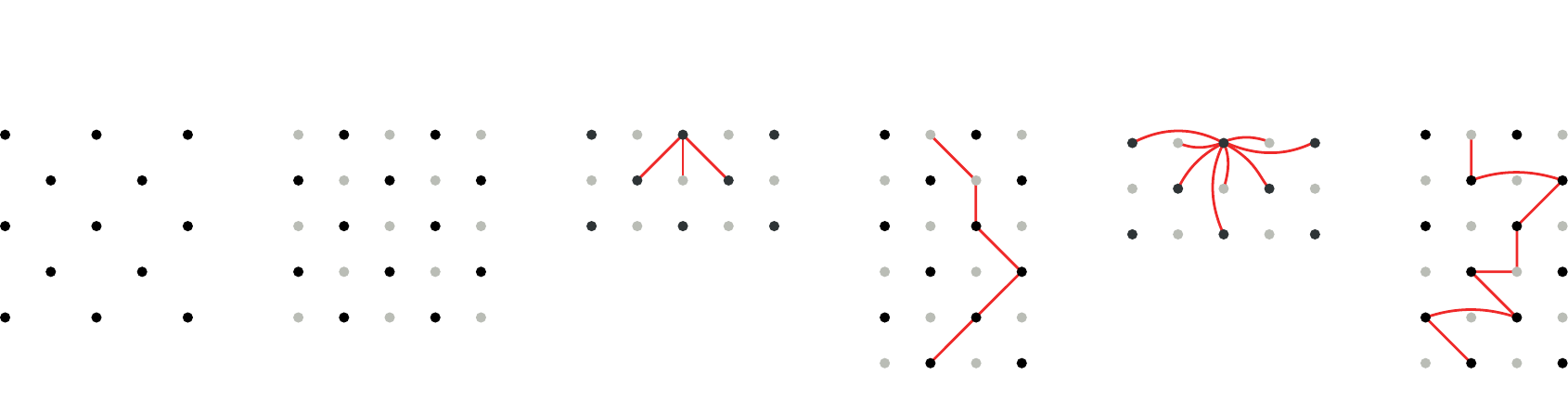
\caption{a) Typical lace grid (Torchon $45^{\circ}$ example shown). b) Square lattice created from lace grid. c) Step vectors for Motzkin paths. d) An example Motzkin path with 5 steps. e) Step vectors for lace paths. f) An example lace path with 8 steps.}
\label{fig:stepvectors}
\end{center}
\end{figure*}

A lace pattern is designed on top of a diagonal lattice, referred to in lace literature as a grid (see, for example, \cite{nottingham2}).  From Lemma \ref{theor:partition}, we know that a lace ground embedding can be partitioned into a set of circuits that do not intersect transversely.   Putting these two ideas together, we decided to look at lace ground embeddings constructed from a non-transversal weave of lattice paths.  A \emph{lattice path} is a sequence of line segments that travel from lattice point to lattice point using a finite list of allowed motions.  For example, as shown in Figure \ref{fig:stepvectors}c, the well known Motzkin paths have the allowed steps $\swarrow$, $\downarrow$ and $\searrow$\footnote{To align with the top to bottom way lace is designed, we have rotated the steps $90^{\circ}$ clockwise from their traditional left to right orientation} \cite{donaghey}.  These allowed motions can be expressed more precisely as a set of step vectors $\mathfrak{M}=\{\step{-1,1}, \step{0,1}, \step{1,1}\}$ in which $\step{x,y}$ indicates a step of $x$ units in the horizontal direction and $y$ units in the vertical direction.  For lace grounds, we chose a larger range of step vectors: $\mathfrak{L}=$ $\{\step{-1,1}$, $\step{0,1}$, $\step{1,1}$, $\step{0,2}$, $\step{1,0}$, $\step{-1,0}$, $\step{2,0}$, $\step{-2,0}\}$ (see Figure \ref{fig:stepvectors}e).  The paths produced from these step vectors will be referred to as \emph{lace paths}. A lace path starts at position $(x,y)$, ends at position $(x,y+n)$ and may travel left and/or right of the vertical line connecting these two points.  In addition, a lace path can not have consecutive horizontal steps (i.e., $\step{2,0}$ can not be followed by $\step{1,0}$, $\step{-1,0}$, $\step{2,0}$ or $\step{-2,0}$).  If a lace path has two or more consecutive horizontal steps, it can only be combined with another lace path via a transverse intersection.  As described earlier, all intersections in the solution must be non-transverse.

\begin{figure*}
\begin{center}
\resizebox*{!}{4cm}{\includegraphics{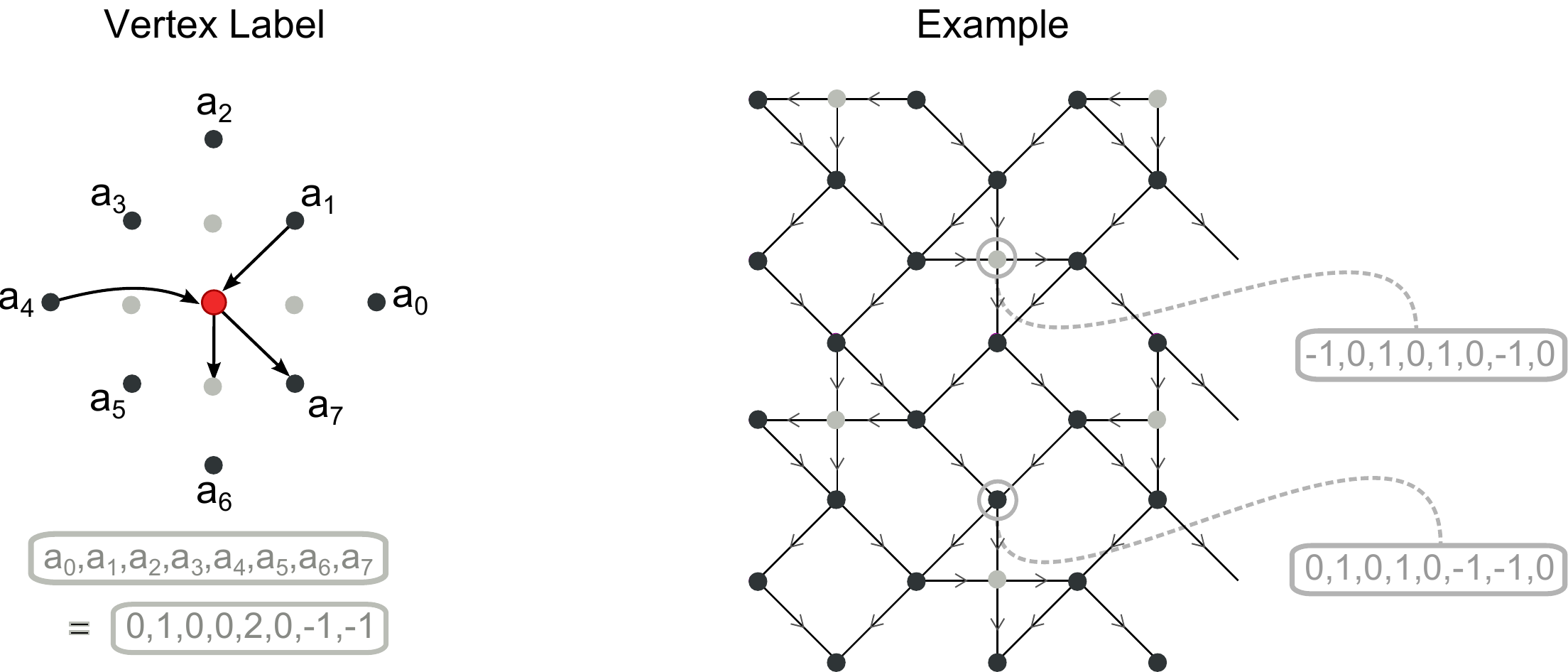}}
\caption{Left: Labelling scheme for arcs around a vertex. The absolute value of the number in the label indicates the size of step taken. The sign of the value indicates incoming (+) versus outgoing (-) direction.  Right: Example solution with two vertices labelled.}
\label{fig:labelling}
\end{center}
\end{figure*}

We wish to identify patterns invariant under the transformations of translation, horizontal or vertical reflection and $180^{\circ}$ rotation. To do this, we define a canonical form of the embedding and discard any solutions not in canonical form.  Each vertex in $\Pi^{\tau}$ is labelled using the rotational order of its arcs.  The label has the form $(a_0, a_1, a_2, a_3, a_4, a_5, a_6, a_7)$ where $a_i$ encodes the direction and length of an arc travelling to or from relative position $i$ (incoming arcs are positive, outgoing arcs are negative, length is the floor of the distance between arc endpoints). See Figure \ref{fig:labelling} for an example.  An identifier is constructed by stringing together vertex labels in row by row order. The lexicographically least form of this identifer is found by translating the period parallelogram so that the vertex with the lexicographically least label is in the top left corner.
The process is repeated for the horizontal and vertical reflection and $180^{\circ}$ rotation of $\Pi^{\tau}$ (Note: In these last two cases, the direction of each arc is reversed). The canonical form of $\Pi^{\tau}$ is the lexicographically least form of all of these identifiers.

\section{Algorithm for Generation and Enumeration}

Backtracking is an algorithm often used for exhaustive enumeration.  For example, Ramanath and Walsh used backtracking to enumerate 2-regular directed graphs ~\cite{ramanath}.
In backtracking, a tree is created in which internal nodes represent partial solutions and leaves represent complete solutions.  Each child node extends the partial solution of its parent by adding one additional move or action.  As each node is added to the tree, the partial solution is evaluated to determine if it has the  potential to complete successfully. If a partial solution can not be completed, the associated branch is terminated, a condition commonly referred to as early branch termination.
Backtracking often yields a huge improvement over the brute force approach of evaluating all possible combinations.  The key to improving performance is to keep the tree as small as possible by minimizing the number and length of the branches.  This can be done through the choice of good early termination conditions and a small set of configurations for each branching point.

In our application, internal nodes of the backtracking tree contain a partially completed toroidal ground embedding, $\Pi^{\tau}$.  At each child node the arcs of a lace path are added to the solution.  Early branch termination tests check that each vertex has a maximum degree of 2-in,2-out, arcs cross only at vertices, and internal vertex labels are lexicographically greater than or equal to the top left vertex label.  When an embedding is successfully completed, it is further tested for connectedness and an exact degree of 2-in,2-out.

In Algorithm \ref{alg:backtrack}, we represent $\Pi^{\tau}$ as a 2 dimensional array of vertex labels.  The array corresponds to a rectangular $n\times m$ grid representing the period parallelogram of the ground with each cell representing a square lattice point.  When the algorithm produces a complete solution, it may contain vertices with an empty set of arcs.  These vertices correspond to unused lattice positions and can be ignored.

\begin{algorithm2e}
\DontPrintSemicolon
\small{
\KwIn {$n$, Number of rows of in lattice}
\KwIn {$m$, Number of columns in lattice}
\Begin{
    $L$ $\leftarrow$ Generate an exhaustive list of lace paths from $(0,0)$ to $(n,0)$ \;
    Initialize $\Pi^{\tau}$ with isolated vertices \;
    \textsc{back}$(\Pi^{\tau}, L, 0)$ \;
}}
\caption{\textsc{enumerate}$(n,m)$}
\label{alg:backtrack}
\end{algorithm2e}

\begin{algorithm2e}
\DontPrintSemicolon
\NoCaptionOfAlgo
\small{
\KwIn{$\Pi^{\tau}$, An $n\times m$ ground embedding}
\KwIn{$L$, Set of lace path configurations}
\KwIn{$index$, Index of a column in ground embedding}
\Begin{
    \If{$index = n$}{
        \If{\textsc{validEmbedding}$(\Pi^{\tau})$} {
            Save ground embedding to file system \;
            Update total count \;
       }
    }\Else{
        \tcc{Vertex $(0,index)$ not used}
        \textsc{back}$(\Pi^{\tau}, L, index+1)$ \;

        \For{$l \in L$} {
            $cloneA \leftarrow$ clone of $\Pi^{\tau}$ \;
            \If{\textsc{addPath}$(cloneA, l, index)$} {
                \tcc{1 path}
                \textsc{back}$(cloneA, L, index+1)$ \;
                \For{$ll \in L$} {
                    $cloneB \leftarrow$ clone of $cloneA$ \;
                    \If{\textsc{addPath}$(cloneB, ll, index)$} {
                        \tcc{2 paths}
                        \textsc{back}$(cloneB, L, index+1)$ \;
                    }
                }
            }
        }
    }
}}
\caption{\textsc{back}$(\Pi^{\tau}, L, index)$}
\end{algorithm2e}

\begin{algorithm2e}
\DontPrintSemicolon
\NoCaptionOfAlgo
\small{
\KwData{$\Pi^{\tau}$, An $n\times m$ ground embedding}
\KwData{$l$, List of step vectors describing a lattice path}
\KwData{$index$, Index of a column in ground embedding}
\KwResult{ True if path added successfully}
\Begin{
    $v \leftarrow$ vertex at position $(0, index)$ \;
    \For{$step \in l$}{
        $arc \leftarrow$ Create arc $(v,w)$ using $step$ \;
        Add $arc$ to outgoing arc set of $v$ \;
        \If{not \textsc{validVertex}$(v, \Pi^{\tau})$}{
            \Return{false}
        }
        Add $arc$ to incoming arc set of $w$ \;
        \If{not \textsc{validVertex}$(w, \Pi^{\tau})$} {
            \Return{false}
        }
        $v \leftarrow w$ \;
    }
    \Return{true}
}}
\caption{\textsc{addPath}$(\Pi^{\tau}, l, index)$}
\end{algorithm2e}

\begin{algorithm2e}
\DontPrintSemicolon
\NoCaptionOfAlgo
\small{
\KwData{$v$, A vertex in $\Pi^{\tau}$}
\KwData{$\Pi^{\tau}$, An $n\times m$ ground embedding}
\KwResult{True if $v$ meets all intermediate solvability tests}
\Begin{
    Ensure vertex $v$ has at most 2 in and 2 out arcs \;
    Ensure arcs of $v$ do not cross any existing arcs \;
    Ensure arc list of $v$ is lexicographically less than or equal to arc list of vertex $(0,0)$ of $\Pi^{\tau}$ \;
    Repeat previous step for horizontal and vertical reflection and $180^{\circ}$ rotation of $v$ \;
    \Return{true if all conditions are met}
}}
\caption{\textsc{validVertex}$(v, \Pi^{\tau})$}
\end{algorithm2e}

\begin{algorithm2e}
\DontPrintSemicolon
\NoCaptionOfAlgo
\small{
\KwData{$\Pi^{\tau},$ An $n\times m$ ground embedding}
\KwResult{True if $\Pi^{\tau}$ meets all criteria}
\Begin{
    Ensure all non-isolated vertices have exactly 2 incoming and 2 outgoing arcs \;
    Ensure $\Pi^{\tau}$ is connected \;
    \Return{true if all conditions are met}
}}
\caption{\textsc{validEmbedding}$(\Pi^{\tau})$}
\end{algorithm2e}

\section{Results}
\label{sec:results}
In Section \ref{sec:output} we demonstrate that the model produces lace grounds that can be worked alone or in combination.   In Section \ref{sec:compare} we compare the results obtained from our algorithm to traditional lace grounds.

\subsection{Output from Algorithm}
\label{sec:output}
The enumeration results for lace paths of vertical height $n$, shown in Table \ref{table:lacepaths}, were determined algorithmically.  The corresponding lace ground results, shown in Table \ref{table:resulttable}, are accumulative.  For example, the $3\times 3$ enumeration includes the $1 \times 1$, $1 \times 3$ and $3 \times 1$ ground embeddings.

The backtracking algorithm was implemented in Java. Since each branch of the backtracking tree can be evaluated independently, the fixed thread pool service of Java's concurrent Executor class was used to process branches in parallel.  The algorithm was executed on a 3.40 GHz machine with eight Intel i-4770 cores, 16 GB RAM and a 64 bit Windows Operating System.  The size of the period parallelogram explored using this approach was limited by the performance of the algorithm.  For example, the $4\times 4$ enumeration completed in 431 hours and created a tree with approximately $7\times 10^{8}$ nodes.

\begin{table}
\caption{Enumeration of lace paths of height $n$.}
\label{table:lacepaths}
\begin{center}
{\begin{tabular}{@{}rrrrrr}
  \hline
  $n=$         & 1  & 2   & 3     & 4      & 5            \\
  \hline
  path count = & 3  & 39  & 498   & 6667   & 91833        \\
  \hline
\end{tabular}}
\end{center}
\end{table}

\begin{table}
\begin{center}
\begin{threeparttable}
\caption{Enumeration of $\Pi^{\tau}$ using backtracking algorithm.}
\label{table:resulttable}
{
\begin{tabular}{@{}crrrrrc}
  \hline
  $n\backslash m$ & 1  & 2   & 3     & 4      & 5             & \\ \hline
  1               & 1  & 2   & 2     & 4      & 4             & \multirow{5}{*} {\def\svgscale{0.5}%% Creator: Inkscape 0.48.3.1, www.inkscape.org
%% PDF/EPS/PS + LaTeX output extension by Johan Engelen, 2010
%% Accompanies image file 'FigureInTable.pdf' (pdf, eps, ps)
%%
%% To include the image in your LaTeX document, write
%%   \input{<filename>.pdf_tex}
%%  instead of
%%   \includegraphics{<filename>.pdf}
%% To scale the image, write
%%   \def\svgwidth{<desired width>}
%%   \input{<filename>.pdf_tex}
%%  instead of
%%   \includegraphics[width=<desired width>]{<filename>.pdf}
%%
%% Images with a different path to the parent latex file can
%% be accessed with the `import' package (which may need to be
%% installed) using
%%   \usepackage{import}
%% in the preamble, and then including the image with
%%   \import{<path to file>}{<filename>.pdf_tex}
%% Alternatively, one can specify
%%   \graphicspath{{<path to file>/}}
%% 
%% For more information, please see info/svg-inkscape on CTAN:
%%   http://tug.ctan.org/tex-archive/info/svg-inkscape
%%
\begingroup%
  \makeatletter%
  \providecommand\color[2][]{%
    \errmessage{(Inkscape) Color is used for the text in Inkscape, but the package 'color.sty' is not loaded}%
    \renewcommand\color[2][]{}%
  }%
  \providecommand\transparent[1]{%
    \errmessage{(Inkscape) Transparency is used (non-zero) for the text in Inkscape, but the package 'transparent.sty' is not loaded}%
    \renewcommand\transparent[1]{}%
  }%
  \providecommand\rotatebox[2]{#2}%
  \ifx\svgwidth\undefined%
    \setlength{\unitlength}{58.92519531bp}%
    \ifx\svgscale\undefined%
      \relax%
    \else%
      \setlength{\unitlength}{\unitlength * \real{\svgscale}}%
    \fi%
  \else%
    \setlength{\unitlength}{\svgwidth}%
  \fi%
  \global\let\svgwidth\undefined%
  \global\let\svgscale\undefined%
  \makeatother%
  \begin{picture}(1,1.05952816)%
    \put(0,0){\includegraphics[width=\unitlength]{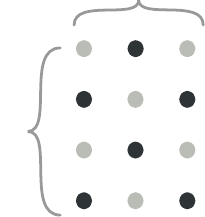}}%
    \put(0.07355227,0.10626131){\color[rgb]{0,0,0}\rotatebox{90}{\makebox(0,0)[lb]{\smash{{\footnotesize $n=4$}}}}}%
    \put(0.35152975,1.12178019){\color[rgb]{0,0,0}\makebox(0,0)[lb]{\smash{{\footnotesize $m = 3$}}}}%
  \end{picture}%
\endgroup%
} \\
  2               & 4  & 12  & 31    & 126    & 542           &\\
  3               & 6  & 31  & 274   & 3,527   & 53,196     &\\
  4               & 27 & 188 & 4029  & 134,012 & $\geq 78,061^{\rm a}$   &\\
  5               & 82 & 937 & 64,050 & $\geq 124,100^{\rm b}$    &         &\\
  \hline
\end{tabular}
}
\begin{tablenotes}
    \small
    \item $^{\rm a}$ Terminated before completion, tree contained $1.48 \times 10^{11}$ nodes
    \item $^{\rm b}$ Terminated before completion, tree contained $1.26 \times 10^{11}$ nodes
\end{tablenotes}
\end{threeparttable}
\end{center}
\end{table}

\begin{figure*}
\begin{center}
\begin{minipage}{\textwidth}
\subfigure[]{
    \resizebox*{!}{3cm}{\includegraphics{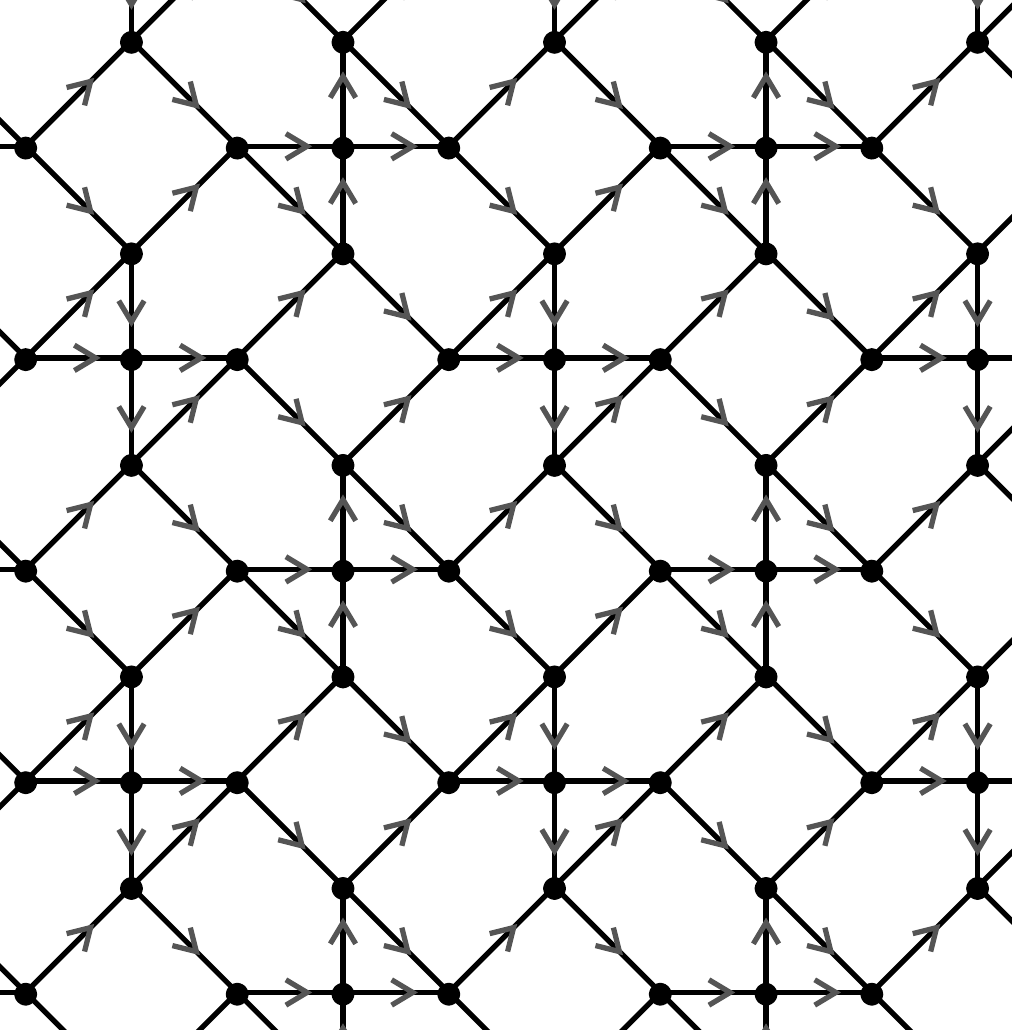}}
}
\subfigure[]{
    \resizebox*{!}{3cm}{\includegraphics{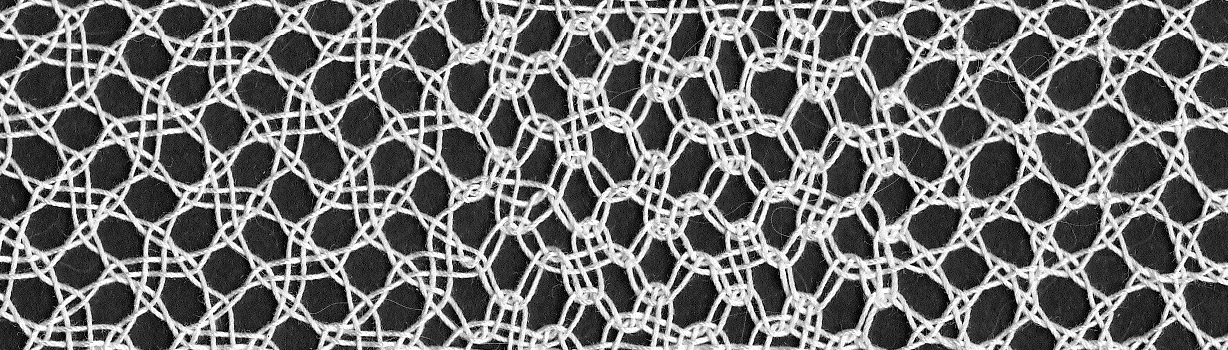}}
}
\\
\subfigure[]{
    \resizebox*{!}{3cm}{\includegraphics{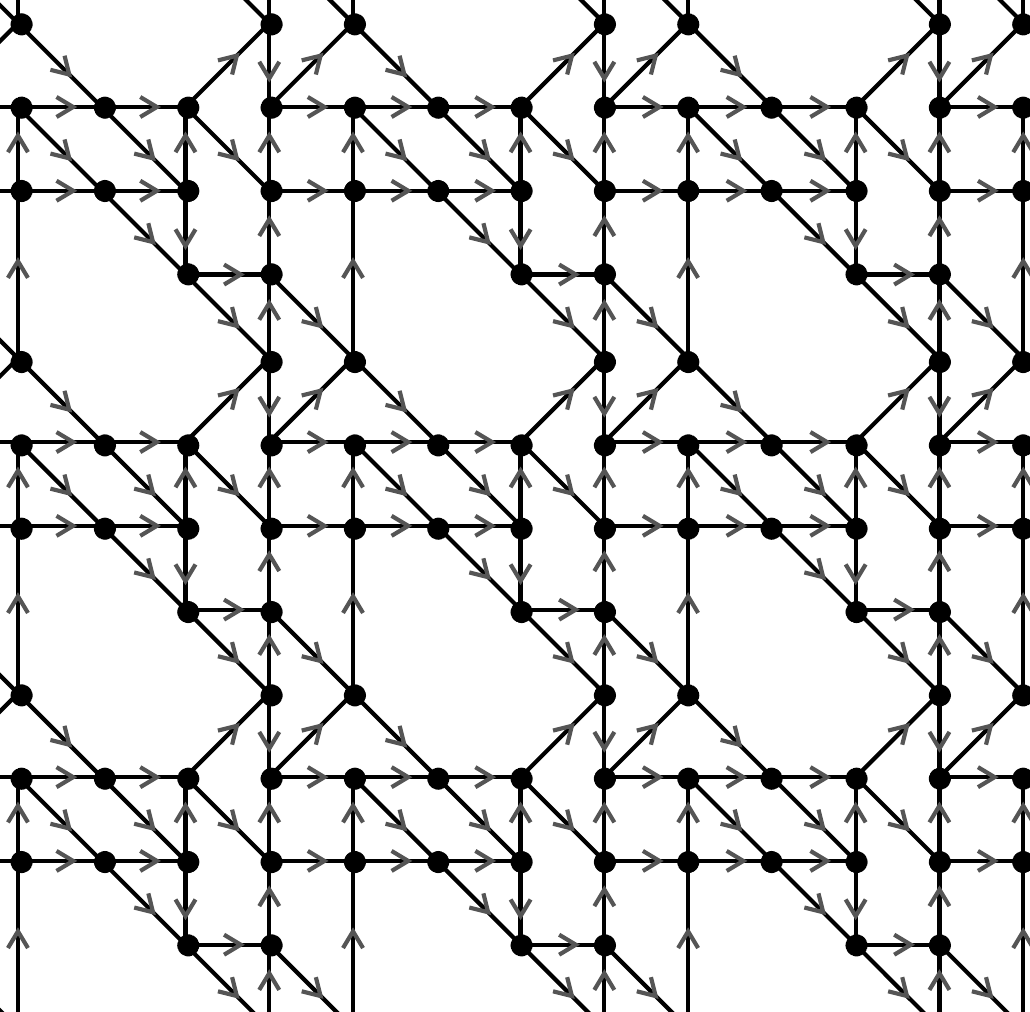}}
}
\subfigure[]{
    \resizebox*{!}{3cm}{\includegraphics{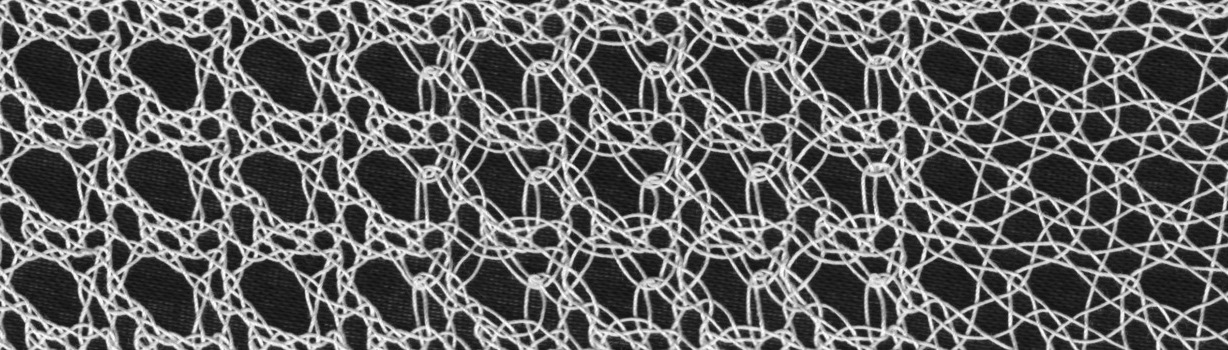}}
}
\\
\subfigure[]{
    \resizebox*{!}{3cm}{\includegraphics{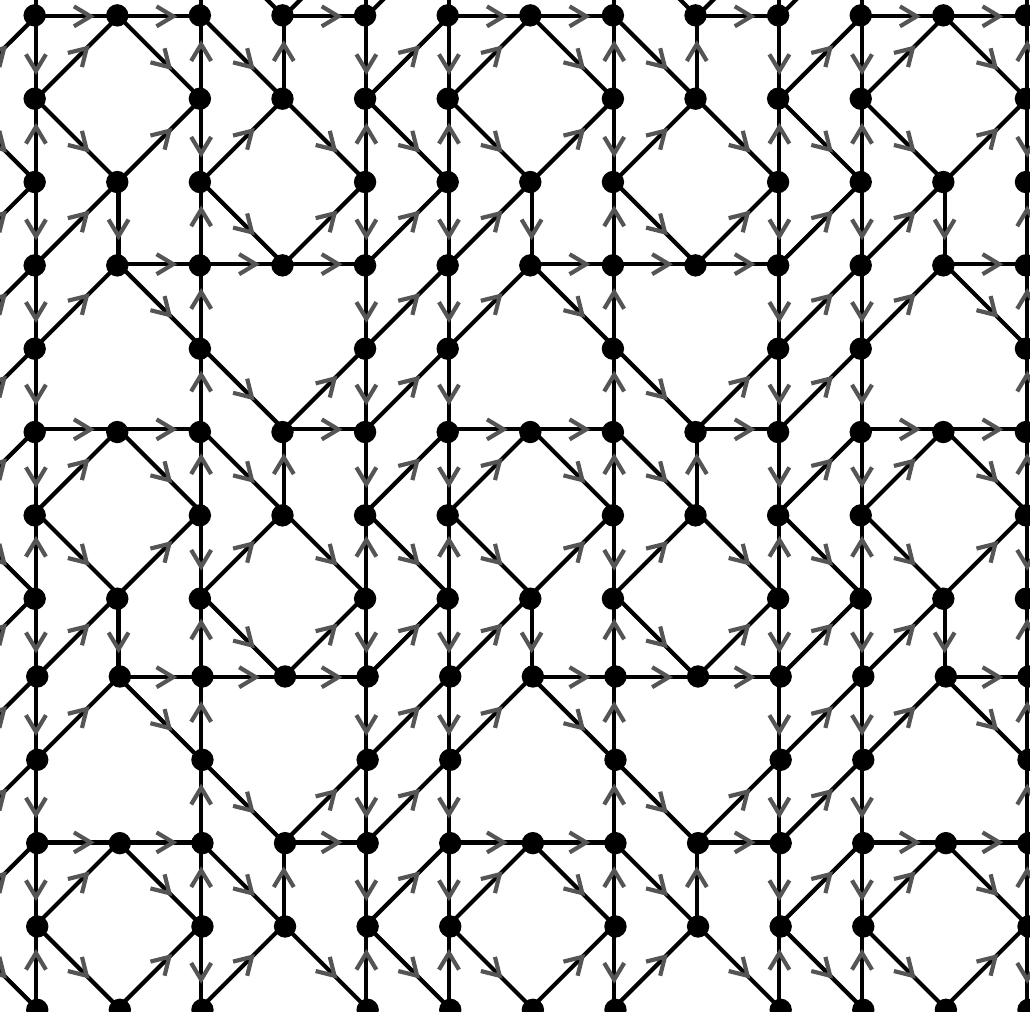}}
}
\subfigure[]{
    \resizebox*{!}{3cm}{\includegraphics{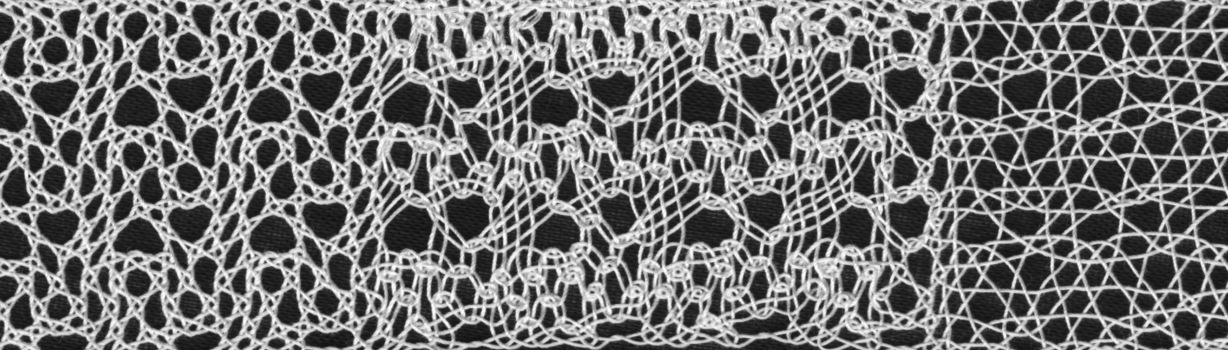}}
}
\caption{New lace grounds discovered via algorithm.  Each ground embedding was worked using three different $\zeta(v)$ functions.  As far as the authors are aware, these patterns have not been previously documented. The patterns and finished lace are rotated $90^{\circ}$ counter clockwise.}
\label{fig:worked}
\end{minipage}
\end{center}
\end{figure*}

Several samples of the solutions in Table \ref{table:resulttable} have been worked using a variety of action sequences.  Three of these  are shown in Figure \ref{fig:worked}.  The sample on the top demonstrates four-fold rotational symmetry and belongs to the orbifold wallpaper group $4*2$ \cite{conway}.  While most traditional lace grounds exhibit some form of reflection, rotational symmetry is fairly uncommon.  The second and third examples show unusual hole shapes and a complex flow of threads.

In Figure \ref{fig:bookmark2}, we present an original bookmark pattern using ground embeddings identified by our algorithm.  The bookmark demonstrates that the new grounds can be used together.

\subsection{Comparison to traditional lace grounds}
\label{sec:compare}

There are a number of excellent reference catalogues documenting the lace grounds found in traditional patterns \cite{cook, system, viele}.
A detailed comparison between our results and the reference catalogues is challenging because the catalogues do not always give a full description of how pairs of threads travel through a ground.  In many cases, this information must be derived from thread diagrams or pricking diagrams which requires quite a bit of time.  However, we can draw some general conclusions.

\begin{figure*}
\begin{center}
\begin{minipage}{\textwidth}
\begin{center}
\subfigure[]{
\resizebox*{!}{2.2cm}{\includegraphics{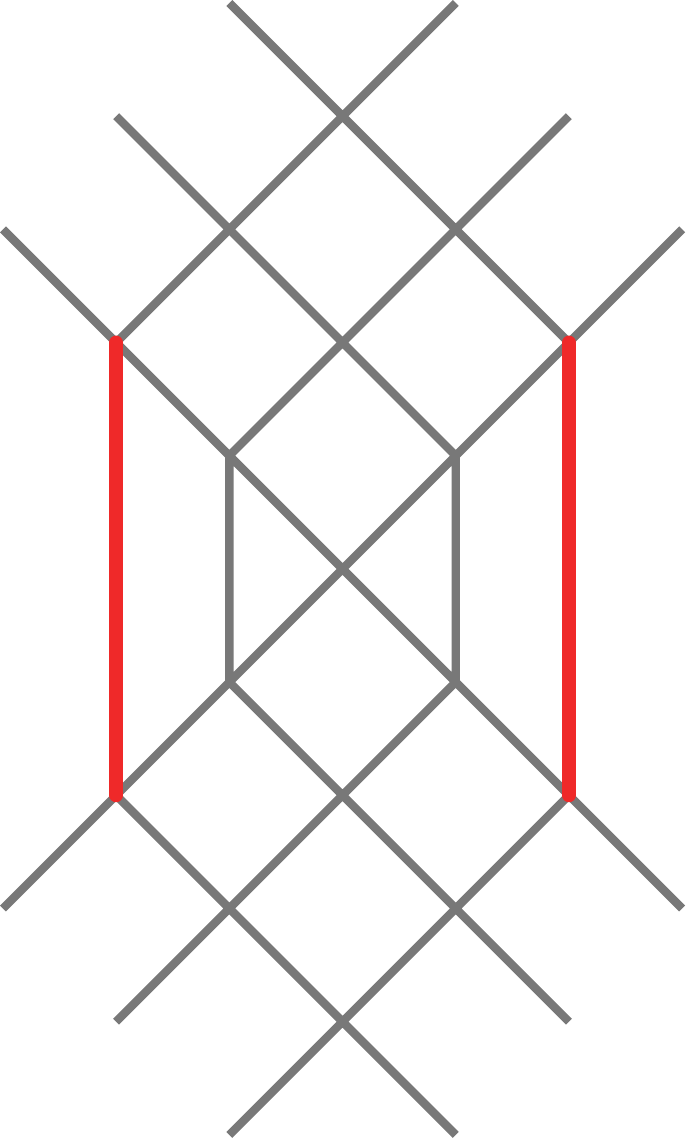}}}%
\hspace{0.5cm}
\subfigure[]{
\resizebox*{!}{2.2cm}{\includegraphics{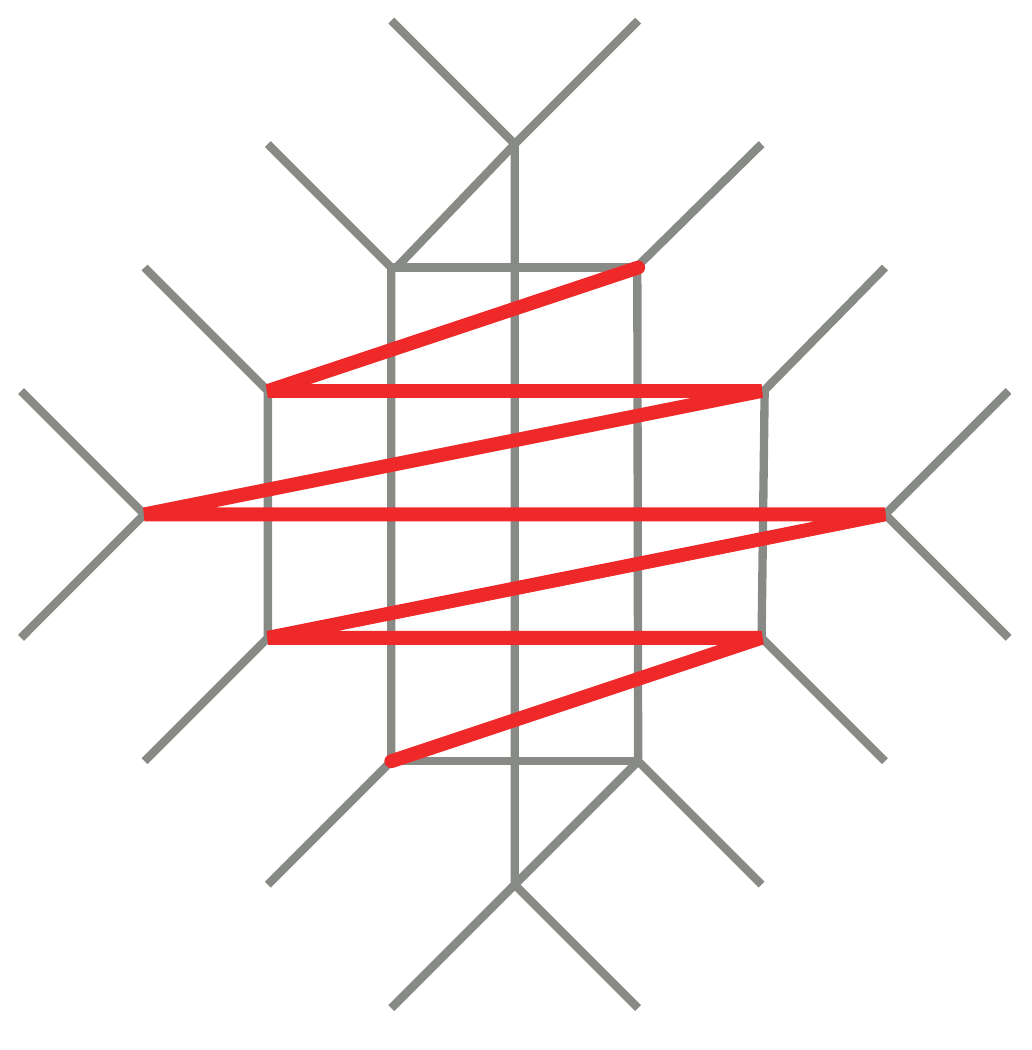}}}%
\hspace{0.5cm}
\subfigure[]{
\resizebox*{!}{2.2cm}{\includegraphics{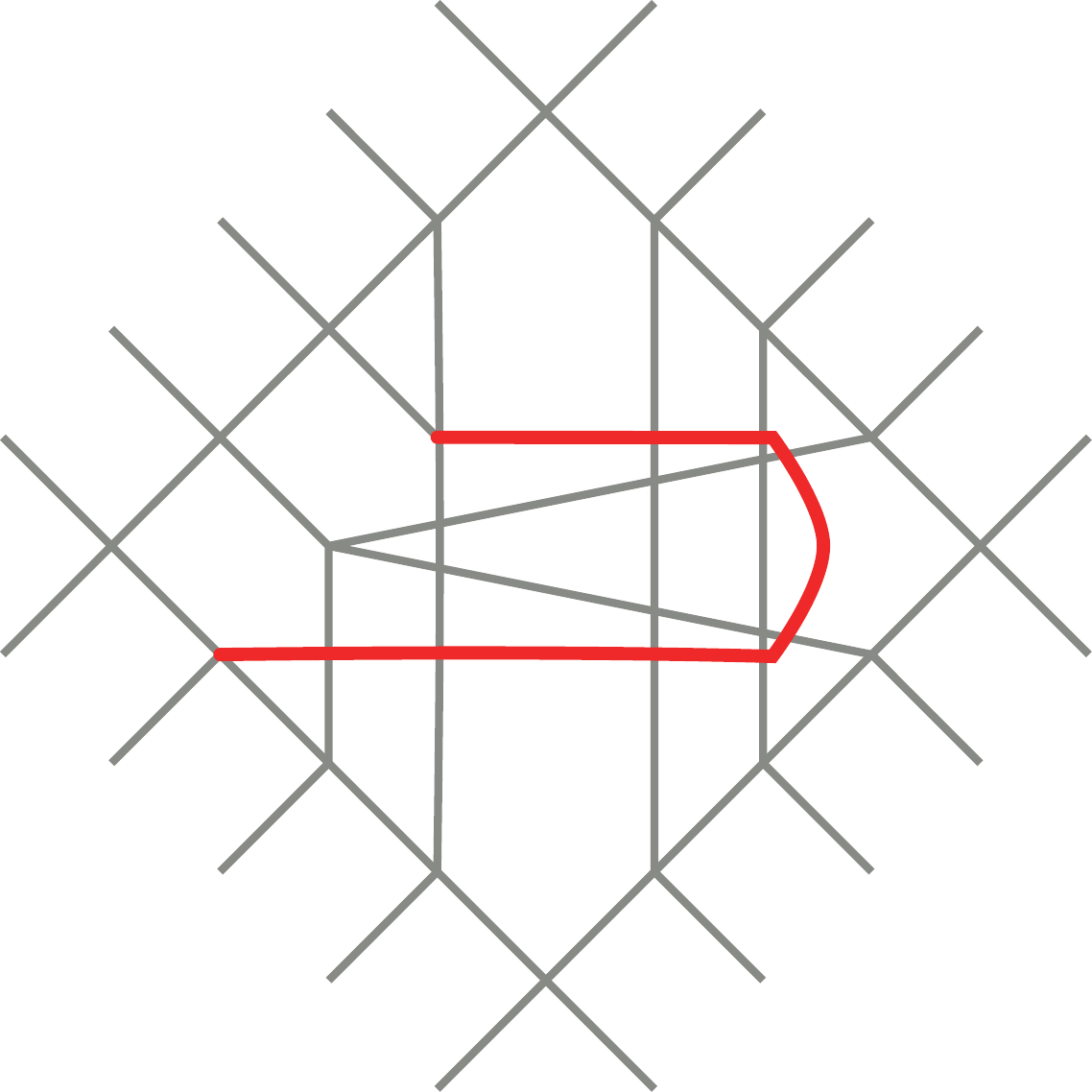}}}%
\end{center}
\caption{a) Spider b) Diamond c) Star. Bold lines indicate arcs not covered by lace paths.}
\label{fig:next}
\end{minipage}
\end{center}
\end{figure*}

The lace paths used in our evaluation do not cover some well known motifs such as the ones shown in Figure \ref{fig:next}.  The `spider' in Figure \ref{fig:next}a, could be generated by augmenting the set of step vectors considered (assuming the performance of the algorithm can be improved). The `diamond' and `star' shown in Figure \ref{fig:next}b and \ref{fig:next}c have a large number of crossings that are not on a lattice point and a different approach should be considered.

The catalogues, Viele G\"{u}te Gr\"{u}nde \cite{viele} in particular, contain grounds with over 1000 lattice points in the period parallelogram, well beyond the performance capability of our algorithm.  These large grounds typically possess $*2222$ or $*442$ orbifold symmetry \cite{conway}.

\begin{table}
\caption{Number of lace grounds reported by source.}
\label{table:catalogues}
\begin{center}
{\begin{tabular}{ll}
  \hline
  Source                                       & Number of Lace Grounds \\ \hline
  The Book of Bobbin Lace Stitches\cite{cook}  & 262 \\
  Gr\"{u}nde mit System\cite{system}           & 449 \\
  Viele G\"{u}te Gr\"{u}nde\cite{viele}        & 344 \\
  Algorithm                                    & $\sim100,000$ (no actions assumed) \\
  \hline
\end{tabular}}
\end{center}
\end{table}

Despite the limitations of a small step vector set and a restricted number of vertices, comparing the number of ground embeddings discovered algorithmically to the number of lace grounds recorded in the catalogues (see Table \ref{table:catalogues}) reveals that the algorithm has identified a large number of new patterns.  The catalogue counts in Table \ref{table:catalogues} include many examples where the same pair working diagram is used with several different cross-twist action combinations, each one of which is counted as a separate ground.  The algorithmic results in Table \ref{table:resulttable} strictly enumerate the number of ground embeddings with no assumption about a particular $\zeta$ function for the vertices.

It could be argued that the majority of new, algorithmically identified patterns lack symmetry and other elements necessary for aesthetic appeal.  To assess how interesting these new patterns may be to the community, we solicited feedback from lacemakers.  The ground patterns were announced on the user forum of the International Organization of Lace \cite{ningme} and made available through our web site \cite{uvicme}.  Lacemakers from the Czech Republic and Belgium have made small test designs using our results \cite{ningme} and some of the patterns are being included in a lace design software tool \cite{tool}.  We have received feedback that the asymmetric textures feel very organic and provide an interesting direction for modern designs.

 \begin{figure*}
\begin{center}
\begin{minipage}{\textwidth}
\begin{center}
\subfigure[]{
 \def\svgscale{0.42}
    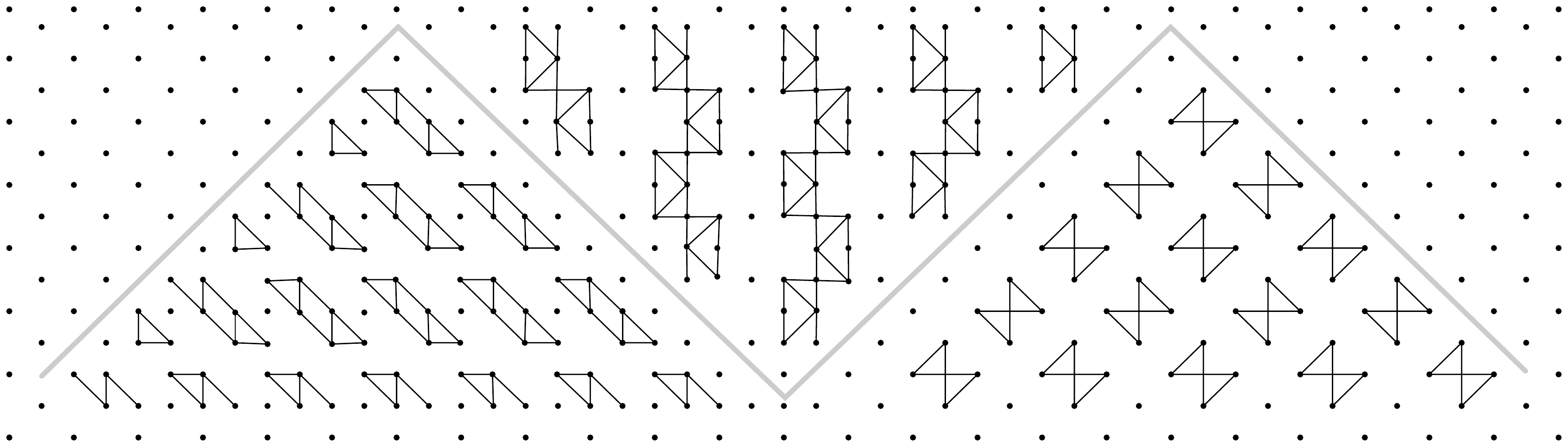 %a
}
\subfigure[]{
 \def\svgscale{0.9}
    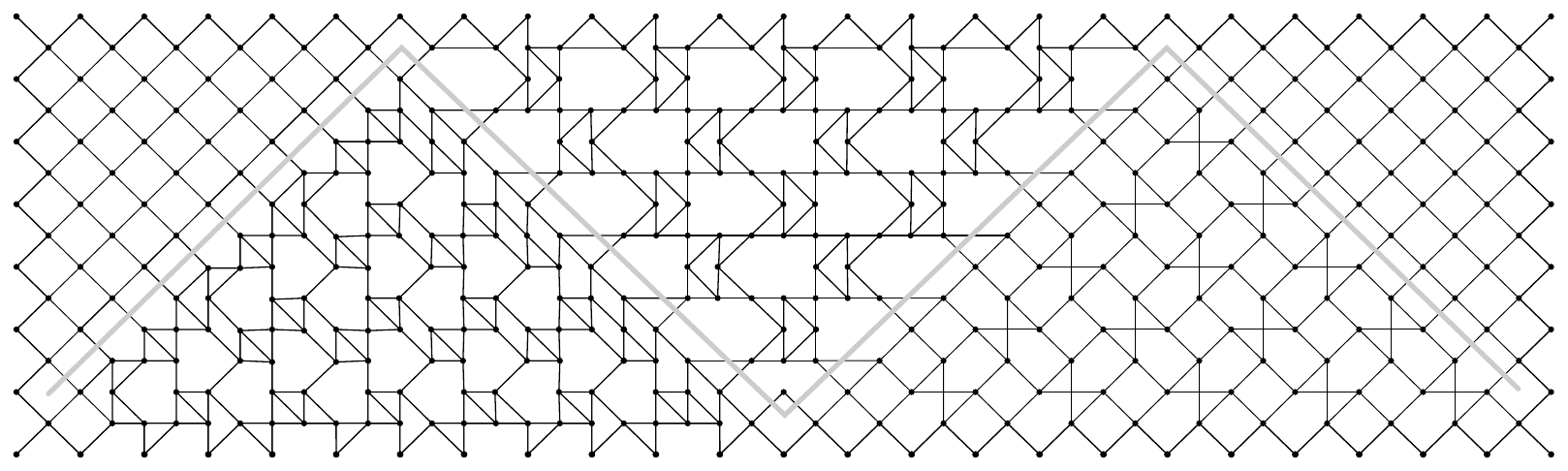 %b
}
\subfigure[]{
    \def\svgscale{0.48}
    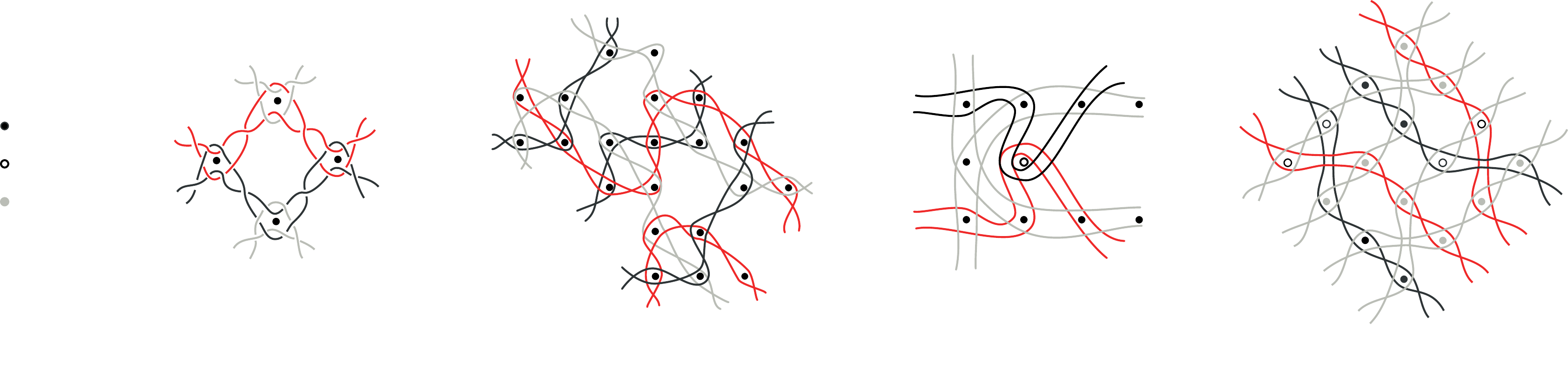 %c
}
\subfigure[]{
\resizebox*{\textwidth}{!}{\includegraphics{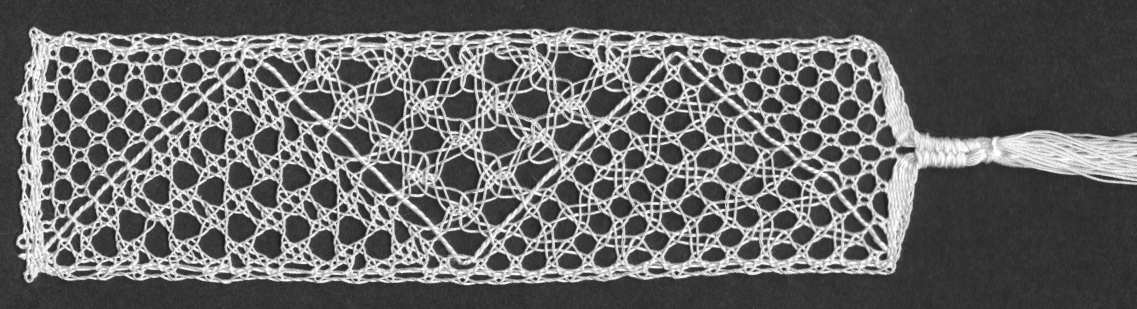}}} %
\end{center}
\caption{Note: Diagrams have been rotated $90^{\circ}$ counter clockwise. a) Pattern for bookmark using new grounds.
b) Pair working diagrams.
c) Thread working diagrams. A: $\zeta(v_1) = CTTpCT$, B: $\zeta(v_1) = CTpCT$, C: $\zeta(v_1) = CTpC$, $\zeta(v_2) = CTCpCTCT$, D: $\zeta(v_1) = CTpCT_{Left}$, $\zeta(v_2) = CTpCT_{Right}$, $\zeta(v_3) = CTpCT$.
d) Completed bookmark.  Actual size: 4 cm by 15 cm.  Made with 19 pairs of DMC Cordonnet Special No. 40. and DMC Coton Perl\'{e} No 5. }
\label{fig:bookmark2}
\end{minipage}
\end{center}
\end{figure*}

\section{Future Work}
\label{sec:future}

From a combinatorial point of view, the next step is to look at the size limitation imposed by the exponential growth of potential configurations and find a way to increase the vertex count of generated solutions.   Generating solutions with a certain symmetry type may significantly reduce the scope and therefore increase performance.
From the view point of an artist, there are already a large number of grounds to consider.  The embeddings need to be organized based on structural similarity and symmetry.  Also, as one can observe in the worked samples shown in Figures \ref{fig:worked} and \ref{fig:bookmark2}, it is not easy to predict how a particular $\zeta(v)$ mapping will affect the final appearance.  A tool that can help visualize these combinations would be invaluable.

\section*{Acknowledgments}
The authors would like to the thank the anonymous reviewers for their abundant, insightful comments and helpful suggestions.    Many thanks also to Jo Pol, Lorelei Halley and members of the International Organization of Lace community who have experimented with our ideas and provided feedback.

\bibliographystyle{plainnat}
\bibliography{mybib}

\begin{thebibliography}{37}
\providecommand{\natexlab}[1]{#1}
\providecommand{\url}[1]{\texttt{#1}}
\expandafter\ifx\csname urlstyle\endcsname\relax
  \providecommand{\doi}[1]{doi: #1}\else
  \providecommand{\doi}{doi: \begingroup \urlstyle{rm}\Url}\fi

\bibitem[Artin(1950)]{artin}
Emil Artin.
\newblock The theory of braids.
\newblock \emph{American Scientist}, 38\penalty0 (1):\penalty0 112--119, 1950.

\bibitem[Beeckman et~al.(2013)Beeckman, Corbet, Pichl, and Wanzenried]{moderne}
Yolande Beeckman, Barabara Corbet, Sabine Pichl, and Esther Wanzenried.
\newblock \emph{Moderne {G}r\"{u}nde}.
\newblock Deutscher {K}l\"{o}ppelverband e.{V}., \"{U}bach-Palenberg, Germany,
  2013.

\bibitem[Belcastro and Yackel()]{belcastroyackel}
Sarah-Marie Belcastro and Carolyn Yackel.
\newblock \emph{Making mathematics with needlework}.
\newblock A.K. Peters Ltd, Wellesley MA, USA.

\bibitem[Conway et~al.(2008)Conway, Burgiel, and Goodman-Strauss]{conway}
John~H. Conway, Heidi Burgiel, and Chain Goodman-Strauss.
\newblock \emph{The Symmetries of Things}.
\newblock A.K. Peters, Wellesley MA, USA, 2008.

\bibitem[Cook and Stott(1980)]{cook}
Bridget~M. Cook and Geraldine Stott.
\newblock \emph{The Book of Bobbin Lace Stitches}.
\newblock Charles T. Branford Co., London, UK, 1980.

\bibitem[Donaghey and Shapiro(1977)]{donaghey}
R.~Donaghey and L.W. Shapiro.
\newblock Motzkin numbers.
\newblock \emph{Journal of {C}ombinatorial {T}heory, {S}eries {A}}, 23\penalty0
  (3):\penalty0 291--301, 1977.

\bibitem[Dye and Thunder(2008)]{dye}
Gilian Dye and Adrienne Thunder.
\newblock \emph{Beginner's guide to bobbin lace}.
\newblock Search Press, Tunbridge Wells, UK, 2008.

\bibitem[Edkins(2014)]{edkins}
Jo~Edkins.
\newblock Jo {E}dkins' bobbin lace school, 2014.
\newblock URL \url{http://gwydir.demon.co.uk/jo/lace/}.

\bibitem[Gibbons(1985)]{gibbons}
A.~Gibbons.
\newblock \emph{Algorithmic graph theory}.
\newblock Cambridge University Press, Cambridge, UK, 1985.

\bibitem[Grimaldi(1998)]{grimaldi}
Ralph~P. Grimaldi.
\newblock \emph{Discrete and Combinatorial Mathematics}.
\newblock Addison-Wesley, Boston MA, USA, 1998.

\bibitem[Grishanov et~al.(2009{\natexlab{a}})Grishanov, Meshkov, and
  Omelchenko]{grishanovPart1}
S.~Grishanov, V.~Meshkov, and A.~Omelchenko.
\newblock A topological study of textile structures. {P}art {I}: {A}n
  introduction to {T}opological {M}ethods.
\newblock \emph{Textile Research Journal}, 79\penalty0 (8):\penalty0 702--713,
  2009{\natexlab{a}}.

\bibitem[Grishanov et~al.(2009{\natexlab{b}})Grishanov, Meshkov, and
  Omelchenko]{grishanovPart2}
S.~Grishanov, V.~Meshkov, and A.~Omelchenko.
\newblock A topological study of textile structures. {P}art {II}: {T}opological
  invariants in application to textile structures.
\newblock \emph{Textile Research Journal}, 79\penalty0 (9):\penalty0 822--836,
  2009{\natexlab{b}}.

\bibitem[Gr{\"u}nbaum and Shephard(1989)]{tilingsbook}
Branko Gr{\"u}nbaum and G.~C. Shephard.
\newblock \emph{Tilings and patterns}.
\newblock W. H. Freeman and Company, New York NY, USA, 1989.

\bibitem[Guild()]{ukguild}
The~Lace Guild.
\newblock The lace guild.
\newblock URL \url{http://www.laceguild.org}.

\bibitem[Halley()]{ning}
Lorelei Halley.
\newblock Lace {IOLI} discussion forum.
\newblock URL \url{http://laceioli.ning.com}.

\bibitem[Halley(2009{\natexlab{a}})]{halley}
Lorelei Halley.
\newblock Bobbin lace history - overview, 2009{\natexlab{a}}.
\newblock URL \url{http://tinyurl.com/HalleyLaceHistory}.

\bibitem[Halley(2009{\natexlab{b}})]{halleylearn}
Lorelei Halley.
\newblock Learning bobbin lace, 2009{\natexlab{b}}.
\newblock URL \url{http://www.lynxlace.com/learningbobbinlace.html}.

\bibitem[{IOLI}()]{ioli}
{IOLI}.
\newblock International {O}rganization of {L}ace, {I}nc.
\newblock URL \url{http://www.internationalorganizationoflace.org/}.

\bibitem[Irvine()]{ningme}
Veronika Irvine.
\newblock Designing new bobbin lace stitches.
\newblock URL \url{http://tinyurl.com/IOLILaceDiscuss}.

\bibitem[Irvine(2014)]{uvicme}
Veronika Irvine.
\newblock Inkscape extension for bobbin lace grounds, 2014.
\newblock URL \url{http://tinyurl.com/vmiLace1}.

\bibitem[Kortelahti(1981)]{kortelahti}
Eeva-Liisa Kortelahti.
\newblock \emph{Nypla\"{a}tty\"{a} Pitsia\"{a}}.
\newblock Self published, Riste, Finland, 1981.

\bibitem[Leader(2013{\natexlab{a}})]{carrickmacross}
Jean Leader.
\newblock A description of decorated nets, 2013{\natexlab{a}}.
\newblock URL \url{http://tinyurl.com/carrickmacross}.

\bibitem[Leader(2013{\natexlab{b}})]{laceguildhistory}
Jean Leader.
\newblock An account of the history of lace making, 2013{\natexlab{b}}.
\newblock URL \url{http://tinyurl.com/LaceGuildHistory}.

\bibitem[Levey(1983)]{levey}
Santina~M. Levey.
\newblock \emph{Lace: A History}.
\newblock Victoria and Albert Museum, London, UK, 1983.

\bibitem[L{\"o}hr(2002)]{viele}
Ulrike L{\"o}hr.
\newblock \emph{Viele Gute Gr{\"u}nde}.
\newblock Gabriele Zeller, Bremen, Germany, 2002.

\bibitem[Murasugi and Kurpita(1999)]{murasugi}
K.~Murasugi and B.I. Kurpita.
\newblock \emph{A study of braids}, volume 484.
\newblock Kluwer Academic Publishers, Dordrecht, Netherlands, 1999.

\bibitem[Nottingham(1976)]{nottingham}
Pamela Nottingham.
\newblock \emph{The Technique of Bobbin Lace}.
\newblock Batsford, London, UK, 1976.

\bibitem[Nottingham(1981)]{nottingham2}
Pamela Nottingham.
\newblock \emph{The Technique of Bucks Point Lace}.
\newblock Batsford, London, UK, 1981.

\bibitem[{OIDFA}()]{oidfa}
{OIDFA}.
\newblock The international bobbin and needle lace organization.
\newblock URL \url{http://www.oidfa.com/}.

\bibitem[Pol(2014{\natexlab{a}})]{pol}
Jo~Pol.
\newblock Generator of {F}landers meshes, 2014{\natexlab{a}}.
\newblock URL \url{http://tinyurl.com/FlandersGen}.

\bibitem[Pol(2014{\natexlab{b}})]{tool}
Jo~Pol.
\newblock {DiBL}: Diagrams for bobbin lace, 2014{\natexlab{b}}.
\newblock URL \url{https://code.google.com/p/dibl}.

\bibitem[Ramanath and Walsh(1987)]{ramanath}
M.~V.~S. Ramanath and T.~R. Walsh.
\newblock Enumeration and generation of a class of regular digraphs.
\newblock \emph{Journal of Graph Theory}, 11\penalty0 (4):\penalty0 471--479,
  1987.

\bibitem[Theuerkauf(2001)]{theuerkauf}
Inge Theuerkauf.
\newblock \emph{Tierisch Flandrisch}.
\newblock Self published, Melle, Germany, 2001.

\bibitem[Ulrich(2011)]{system}
Uta Ulrich.
\newblock \emph{Gr{\"u}nde mit System, 2nd edition}.
\newblock Barbara Fay Verlag, Gammelby, Germany, 2011.

\bibitem[Vainius(1999)]{hardanger}
Rita Vainius.
\newblock The history of {H}ardangersom, 1999.
\newblock URL \url{http://tinyurl.com/HardangerHistory}.

\bibitem[Verbeke-Billiet(1985)]{colourcode}
Anne-Marie Verbeke-Billiet.
\newblock \emph{Syllabus {B}inche {I}}.
\newblock Kantcentrum, Bruges, Belgium, 1985.

\bibitem[Waters()]{snl}
Laurie Waters.
\newblock Science and lace.
\newblock URL \url{http://scienceandlace.ning.com}.

\end{thebibliography}

\end{document}